%\documentclass[draft
%]{scrartcl}
\documentclass{scrartcl}
\usepackage[utf8]{inputenc}
\usepackage{hyperref}
\usepackage[english]{babel}
\usepackage{enumerate}
\usepackage{aliascnt}
\usepackage{amssymb}
\usepackage{amsmath}
\usepackage{amsthm}
\usepackage{verbatim}
\usepackage{mathtools}
\usepackage{enumitem}

\usepackage{esint} % Für Durchschnittsintegral

\usepackage{tikz}
\usepackage{mathscinet} % Fuer Russische Buchstaben

\usepackage[nottoc]{tocbibind} % Section number von Bibliography verhindern
\usepackage[title]{appendix}

% To show crossreferences,while typing
%\usepackage[notcite,notref]{showkeys}
% To use fuer Bunten Anmerkungen
\usepackage{color}

\numberwithin{equation}{section} % Gleichungen nach Section nummerieren

%\def\changed{\textcolor{blue}}

%\newcommand{\changed}[1]{{#1}}

%**************************************************

\usepackage{dsfont} %f\"ur Indikatorfunktion \mathds{1}
%LAST PACKAGE:
\usepackage{hyperref} %Links, autoref
\usepackage[noabbrev]{cleveref}

\usepackage{autonum} % Load last!

%Settings
\setlength{\parindent}{0pt}  
\allowdisplaybreaks[4] %0-4

%ALLE UMGEBUNGEN:
\newtheorem{prop}{Proposition}[section]

\newaliascnt{lem}{prop} 

\newtheorem{lem}[lem]{Lemma}%[section]

\aliascntresetthe{lem} 
\Crefname{lem}{Lemma}{Lemmas}

\newaliascnt{defi}{prop} 
 \newtheorem{defi}[defi]{Definition}%[section

\aliascntresetthe{defi} 
\Crefname{defi}{Definition}{Definitions}

\newaliascnt{cor}{prop} 
 %[section]
 
\aliascntresetthe{cor}

\newaliascnt{remark}{prop} 
 \newtheorem{remark}[remark]{Remark}%[section]
 
\aliascntresetthe{remark} 

\newaliascnt{thm}{prop} 
 \newtheorem{thm}[thm]{Theorem}%[section]
 
\aliascntresetthe{thm} 

\newaliascnt{example}{prop} 
 %[section]
 
\aliascntresetthe{example}

%[section]

% \def\equationautorefname~#1\null{%
%   Equation~(#1)\null
% }

\def\equationautorefname~#1\null{%
  %Equation
  (#1)\null
}

\newcommand{\R}{\ensuremath{\mathbb{R}}}
\newcommand{\N}{\ensuremath{\mathbb{N}}}

\newcommand{\CalE}{\ensuremath{\mathcal{E}}}
\newcommand{\CalF}{\ensuremath{\mathcal{F}}}

\newcommand{\CalA}{\ensuremath{\mathcal{A}}}

\newcommand{\CalK}{\ensuremath{\mathcal{K}}}
\newcommand{\CalU}{\ensuremath{\mathcal{U}}}
\newcommand{\CalO}{\ensuremath{\mathcal{O}}}

\DeclareMathOperator{\diverg}{div}
\DeclareMathOperator{\curl}{curl}

\DeclareMathOperator{\CalV}{\ensuremath{\mathcal{V}}}

\DeclareMathOperator{\supp}{supp}

\DeclareMathOperator{\cl}{cl}
\DeclareMathOperator{\dist}{dist}

\title{Vanishing of long time average p-enstrophy dissipation rate in the inviscid limit of the 2D damped Navier-Stokes equations}
\author{Raphael Wagner\thanks{Institute of Applied Analysis, Ulm University, Helmholtzstra\ss e 18, 89081 Ulm, Germany. \texttt{raphael.wagner@uni-ulm.de}}}

\begin{document}

\maketitle
\begin{abstract}
\noindent\textbf{Abstract:} In \cite{CR07}, Constantin and Ramos proved a result on the vanishing long time average enstrophy dissipation rate in the inviscid limit of the 2D damped Navier-Stokes equations. In this work, we prove a generalization of this for the $p$-enstrophy, sequences of distributions of initial data and sequences of strongly converging right-hand sides. We simplify their approach by working with invariant measures on the global attractors which can be characterized via bounded complete solution trajectories. Then, working on the level of trajectories allows us to directly employ some recent results on strong convergence of the vorticity in the inviscid limit.
\end{abstract}

\bigskip
\noindent \textbf{Keywords:} inviscid limit, long time averages, damped Euler and Navier-Stokes equations, invariant measures, p-enstrophy.

\section{Introduction and main results}
We consider the two-dimensional damped Navier-Stokes equations
\begin{equation}\label{eq: damped NSE vel}
\begin{split}
\partial_t u^\nu + (u^\nu\cdot\nabla)u^\nu + \gamma u^\nu + \nabla p^\nu - \nu\Delta u^\nu &=  f,\\
u^\nu(0) &= u_0
\end{split}
\end{equation}
in velocity $u^\nu = (u_1^\nu,u_2^\nu)\colon \R^2 \times [0,M) \to \R^2$ with constants $\nu, \gamma > 0$. Then $\omega^\nu \coloneqq \curl u^\nu = \partial_{x_1}u^\nu_2 - \partial_{x_2}u^\nu_1$ is called the vorticity of $u^\nu$ and formally satisfies
\begin{equation}\label{eq: damped NSE}
\begin{split}
\partial_t \omega^\nu + u^\nu\cdot\nabla\omega^\nu + \gamma \omega^\nu - \nu\Delta\omega^\nu &= g,\\
u^\nu &= K*\omega^\nu,
\end{split}
\end{equation} 
where $g \coloneqq \curl f$ and $K(x) \coloneqq \frac{1}{2\pi}\nabla^{\perp}\log|x|, x \in \R^2$.\\
The square of the $L^2(\R^2)$ norm of the vorticity $\omega^\nu(t)$ at some time $t$ is called enstrophy and by testing \eqref{eq: damped NSE} with $\omega^\nu(t)$ in the undamped, unforced situation ($\gamma = 0$ and $g = 0$), one obtains
\[\frac{d}{dt}\|\omega^\nu(t)\|^2_{L^2} + 2\nu\|\nabla \omega^\nu(t)\|_{L^2}^2\,dx = 0\]
so that $-2\nu\|\nabla \omega^\nu(t)\|_{L^2}^2$ represents the enstrophy dissipation rate at time $t$.\\
The inviscid analogue ($\nu = 0$) of \eqref{eq: damped NSE vel}, \eqref{eq: damped NSE}, i.e., the Euler equations, formally conserve the enstrophy and one would particularly expect the enstrophy dissipation rate to vanish in the inviscid limit $(\nu \to 0)$ of the Navier-Stokes equations.\\
%However, it is not immediately clear if weak solutions to the Euler equations can be obtained in the inviscid limit and if they do indeed conserve enstrophy.\\
For $1 \leq p < \infty$, conservation of $p$-enstrophy, by which we mean the $p$-th power of the $L^p(\R^2)$ norm of the vorticity, is a property of renormalized solutions to the vorticity formulation of the Euler equations (see \Cref{def: renorm sol} for a precise definition in our setting). In the class of bounded vorticity in $L^p(\R^2)$, $p \geq 2$, it is well-known that all weak vorticity solutions to the Euler equations are automatically renormalized \cite{DPL89,LFMNL06}. For $p < 2$, such renormalized solutions can be obtained as weak-* limits in $L^\infty((0,M);L^p(\R^2))$ of subsequences of (smooth) exact solutions to the Euler equations or the Navier-Stokes equations with vanishing viscosity \cite{LFMNL06,CS15,CNSS17}.\\
As to the question of vanishing $p$-enstrophy dissipation rate in the inviscid limit, $1\leq p < \infty$, this can at least be deduced for subsequences as a necessary consequence of the strong convergence in $C([0,M];L^p(\R^2))$ of a subsequence of $(\omega^\nu)_{\nu > 0}$ as $(\nu \to 0)$, which is a quite recent improvement in \cite{NLSW21,CCS21} of the aforementioned weak-* convergence.\\
In the above mentioned references, the final time $M$ is fixed. Here, however, likewise to \cite{CR07} and the related work \cite{CTV13} on the surface quasi-geostrophic equations, we do not study the inviscid limit on a finite time interval, but study it in the stationary situation that is obtained by first considering long times averages. More precisely, in \cite{CR07}, Constantin and Ramos proved that the long time average of the enstrophy dissipation rate of the 2D damped Navier-Stokes equations vanishes in the inviscid limit, i.e.,
\begin{equation}\label{eq: Constantin Ramos}
\lim_{\nu\to 0} \nu \limsup_{M\to\infty} \frac{1}{M}\int_0^M\int_{\R^2} |\nabla \omega^\nu|^2\,dx\,dt = 0,
\end{equation}
under the assumptions $f, u_0 \in W^{1,1}(\R^2) \cap W^{1,\infty}(\R^2)$.\\
The main result of this article, which is stated in the following theorem, generalizes this for the $p$-enstrophy, $1 < p < \infty$. By $\lbrace S^\nu(t)\rbrace_{t \geq 0}$, we denote the solution semigroup of the damped Navier-Stokes equations associated to a right-hand side $f^\nu$ on the space $\CalE_1 \cap \CalE_p$ of weakly divergence-free vector fields in $L^2(\R^2)$ whose curl is in $(L^1 \cap L^p)(\R^2)$. In particular, rather than considering fixed $f \in W^{1,1}(\R^2) \cap W^{1,\infty}(\R^2)$, we consider bounded $(f^\nu)_{\nu > 0}$ in $\CalE_1 \cap \CalE_p$ s.t. $(\curl f^\nu)_{\nu > 0}$ is precompact in $L^r(\R^2)$ with $r = \max\lbrace 2,p\rbrace$. Moreover, we allow for sequences of distributions of initial data having finite $p$-th moment w.r.t. the $L^p(\R^2)$ norm, rather than fixed $u_0 \in W^{1,1}(\R^2) \cap W^{1,\infty}(\R^2)$.

\begin{thm}\label{thm: main thm}
Let $(\mu_0^{\nu})_{\nu > 0}$ be a family of Borel probability measures on $\CalE_1 \cap \CalE_p$, satisfying
\begin{equation}\label{eq: main thm finite p moment}
\int_{\CalE_1 \cap \CalE_p} \|\omega(u_0)\|_{L^p}^p\,d\mu^{\nu}_0(u_0) < \infty
\end{equation}
for every $\nu > 0$. Then,
\begin{equation}\label{eq: vanishing mean enstrophy}
\lim_{\nu \to 0}\nu \limsup_{M \to \infty} \frac{1}{M}\int_0^M\int_{\CalE_1 \cap \CalE_p} \int_{\R^2}|\nabla|\omega(S^\nu(t) u_0)|^\frac{p}{2}|^2\,dx\,d\mu_0^\nu(u_0)\,dt = 0.
\end{equation}
\end{thm}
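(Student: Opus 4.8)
The plan is to reduce the long time average in \eqref{eq: vanishing mean enstrophy} to a stationary identity against an invariant measure and then pass to the inviscid limit on the level of bounded complete trajectories. The starting point is the $p$-enstrophy balance: testing the vorticity equation in \eqref{eq: damped NSE} with $\abs{\omega^\nu}^{p-2}\omega^\nu$, using that $u^\nu$ is weakly divergence-free to annihilate the transport term and the pointwise identity $\abs{\nabla\abs{\omega}^{p/2}}^2 = \tfrac{p^2}{4}\abs{\omega}^{p-2}\abs{\nabla\omega}^2$, one obtains
\begin{equation*}
\frac{1}{p}\frac{d}{dt}\Norm{\omega^\nu(t)}{L^p}^p + \gamma\Norm{\omega^\nu(t)}{L^p}^p + \frac{4\nu(p-1)}{p^2}\int_{\R^2}\Abs{\nabla\abs{\omega^\nu(t)}^{p/2}}^2\,dx = \int_{\R^2} g^\nu\,\abs{\omega^\nu(t)}^{p-2}\omega^\nu(t)\,dx.
\end{equation*}
Writing $\Phi^\nu(u_0) \defeq \int_{\R^2}\Abs{\nabla\abs{\omega(u_0)}^{p/2}}^2\,dx$ for the dissipation observable, this shows that, up to the constant $\tfrac{4(p-1)}{p^2}$, the integrand in \eqref{eq: vanishing mean enstrophy} is the viscous dissipation, balanced by the forcing input minus the damping minus a total time derivative.

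Next I would average in time and in the initial datum. Fix $\nu > 0$ and choose $M_k \to \infty$ realizing the inner $\limsup$. Integrating the balance over $[0,M_k]$, dividing by $M_k$ and integrating against $\mu_0^\nu$, the total time derivative contributes the boundary term $\tfrac{1}{pM_k}\int(\Norm{\omega(u_0)}{L^p}^p - \Norm{\omega(S^\nu(M_k)u_0)}{L^p}^p)\,d\mu_0^\nu$, whose first summand tends to $0$ by the finite $p$-th moment assumption \eqref{eq: main thm finite p moment} and whose second summand is nonnegative and may be dropped when bounding the dissipation from above. Using the dissipativity of $\lbrace S^\nu(t)\rbrace_{t\ge 0}$ (a uniform absorbing set and the global attractor $\CalA^\nu$), the time-and-data averaged probability measures $\rho_k^\nu$ defined by $\int\Psi\,d\rho_k^\nu = \tfrac{1}{M_k}\int_0^{M_k}\int\Psi(S^\nu(t)u_0)\,d\mu_0^\nu\,dt$ are tight, and a subsequence converges weakly to an invariant measure $\mu^\nu$ supported on $\CalA^\nu$, which can be described through bounded complete trajectories. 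Passing to the limit along this subsequence (discarding the nonnegative damping term) yields
\begin{equation*}
\frac{4\nu(p-1)}{p^2}\,\limsup_{M\to\infty}\frac{1}{M}\int_0^M\int_{\CalE_1\cap\CalE_p}\Phi^\nu\,d\mu_0^\nu\,dt \;\le\; \int_{\CalE_1\cap\CalE_p}\int_{\R^2} g^\nu\,\abs{\omega}^{p-2}\omega\,dx\,d\mu^\nu.
\end{equation*}

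It then remains to control the right-hand side uniformly in $\nu$ and multiply by $\nu$. The stationary balance against $\mu^\nu$, in which the time derivative disappears by invariance, gives $\gamma\int\Norm{\omega}{L^p}^p\,d\mu^\nu \le \int\int g^\nu\abs{\omega}^{p-2}\omega\,dx\,d\mu^\nu \le \Norm{g^\nu}{L^p}\big(\int\Norm{\omega}{L^p}^p\,d\mu^\nu\big)^{(p-1)/p}$ by Hölder and Jensen, so that $\big(\int\Norm{\omega}{L^p}^p\,d\mu^\nu\big)^{1/p} \le \Norm{g^\nu}{L^p}/\gamma$. Since $(f^\nu)_{\nu>0}$ is bounded in $\CalE_1\cap\CalE_p$, the family $(g^\nu)_{\nu>0}=(\curl f^\nu)_{\nu>0}$ is bounded in $L^p$, so both the $L^p$ mass and the forcing correlation are bounded uniformly in $\nu$. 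Multiplying the displayed inequality by $\nu$ already produces the crude bound $O(\nu)$ and hence \eqref{eq: vanishing mean enstrophy}. To see the conceptually cleaner picture that the dissipation matches the \emph{vanishing} Euler viscous term, I would instead pass $\mu^\nu \to \mu^0$ along a subsequence as $\nu\to 0$: here the recent strong convergence of the vorticity in $L^p$ along bounded complete trajectories is used to identify $\mu^0$ as an invariant measure of the damped Euler dynamics and, crucially, to pass the nonlinear quantity $\abs{\omega}^{p-2}\omega$ to the limit; since renormalized damped-Euler solutions conserve $p$-enstrophy, the limiting stationary balance carries no viscous term, so $\int\int g^0\abs{\omega}^{p-2}\omega\,dx\,d\mu^0 = \gamma\int\Norm{\omega}{L^p}^p\,d\mu^0$ and the viscous contribution is squeezed to zero.

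The main obstacle I anticipate is the inviscid-limit passage of the nonlinear forcing correlation $\int g^\nu\abs{\omega}^{p-2}\omega\,dx$: weak convergence of the vorticity does not suffice to pass $\abs{\omega}^{p-2}\omega$ through, which is precisely why the strong $L^p$ convergence of the vorticity along trajectories (and the precompactness of $(g^\nu)$ in $L^r$) is indispensable, and why formulating everything on the level of bounded complete trajectories pays off. Secondary difficulties are the construction of the invariant measures and their characterization via bounded complete trajectories on the unbounded domain $\R^2$, where asymptotic compactness must be extracted from the damping together with uniform tail estimates, the uniform-in-$\nu$ $L^p$ estimates on the attractors, and the rigorous justification of the $p$-enstrophy balance for the relevant (renormalized) solutions, which is most delicate for $p$ close to $1$.
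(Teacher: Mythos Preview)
Your overall strategy---use the $p$-enstrophy balance, construct time-averaged invariant measures on the attractor via Krylov--Bogolioubov, and pass to the inviscid limit using strong $L^p$ convergence of the vorticity along bounded complete trajectories---is precisely the paper's approach. However, there is a genuine gap in the execution.

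The ``crude bound $O(\nu)$'' is simply false. Your displayed inequality already reads
\[
\frac{4\nu(p-1)}{p^2}\,\limsup_{M\to\infty}\frac{1}{M}\int_0^M\!\!\int_{\CalE_1\cap\CalE_p}\Phi^\nu(S^\nu(t)u_0)\,d\mu_0^\nu\,dt \;\le\; \int_{\CalE_1\cap\CalE_p}\int_{\R^2} g^\nu\,\abs{\omega}^{p-2}\omega\,dx\,d\mu^\nu,
\]
and the left-hand side is, up to a harmless constant, \emph{exactly} the quantity in \eqref{eq: vanishing mean enstrophy}. Bounding the right-hand side by a constant independent of $\nu$ only shows this quantity is $O(1)$, not $O(\nu)$; there is no further factor of $\nu$ to multiply by. The uniform bound on the forcing correlation that you derive from H\"older/Jensen is correct but does not yield the theorem.

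The reason this happens is that you discarded the damping term $-\gamma\int\|\omega\|_{L^p}^p\,d\mu^\nu$ when passing from the time-averaged balance to the displayed inequality. That step is fatal: the whole point (and the content of the paper's \Cref{lem: stationary enstrophy balance}) is that in the Euler limit the forcing and damping \emph{cancel}, i.e.
\[
\int\!\!\int_{\R^2} g\,\abs{\omega}^{p-2}\omega\,dx\,d\rho \;-\; \gamma\int\|\omega\|_{L^p}^p\,d\rho \;=\; 0
\]
for the limit measure $\rho$ supported on renormalized Euler trajectories. You need to keep both terms together, bound the dissipation by their difference (not by the forcing alone), and then pass to the inviscid limit in that difference using the strong $L^p$ trajectory convergence. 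Your ``conceptually cleaner picture'' paragraph identifies this cancellation correctly, but the inequality you wrote down cannot exploit it. A secondary point: since the damped Euler equations need not generate a semigroup, the paper passes to the limit not on phase-space measures $\mu^\nu$ but on their pushforwards $\rho^{\nu}=\Sigma_\omega^{\nu}\sharp\mu^{\nu}$ to trajectory space, where the compactness of \Cref{lem: U is precompact} applies; you allude to this but should make the lift explicit.
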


More information on the definitions and assumptions will be given in \Cref{sec: weak and renormed sols} and \Cref{sec: vanishing p-enstrophy dissip rate}.\\
The proof of this main theorem follows the same approach as \cite{CR07} by first considering measures associated to long time averages for fixed $\nu > 0$ and to then pass on to the limit $(\nu \to 0)$. However, we will employ that these measures are necessarily concentrated on the global attractor of the solution semigroup of the damped Navier-Stokes equations and use its characterization via bounded, complete solution trajectories, which allows us to work on the level of trajectories subsequently. In contrast, Constantin and Ramos work with what are called statistical solutions in phase space. This leads to a notion of renormalized stationary statistical solutions and to the adaptation of some more or less technical arguments of the seminal work \cite{DPL89} on renormalization theory by DiPerna and Lions to the context of phase space statistical solutions. Also, stronger assumptions on the right-hand side were made for these constructions as indicated above.\\
Moreover, by working on the level of trajectories, we may employ some recent results on the inviscid limit of solutions to the Navier-Stokes equations. In particular, the improvement to convergence in $C([0,M];L^p(\R^2))$ in the inviscid limit to renormalized solutions of the corresponding damped Euler equations \cite{CCS21,NLSW21}, which we already mentioned above, will be of use.\\
After this introduction, in \Cref{sec: weak and renormed sols}, we briefly recall some classical and recent theory on renormalized solutions of the damped Euler and Navier-Stokes equations and the inviscid limit. We continue on in \Cref{sec: attractors} with a discussion on the global attractor of the damped Navier-Stokes equations based on \cite{CIZ17,CVZ11,IPZ15,IC17}. Next, in \Cref{sec: time-average invar meas}, we glimpse over a Krylov-Bogolioubov type construction of invariant measures that are concentrated on the global attractor. In particular, we rely on work in \cite{LRR11}. Finally, in \Cref{sec: vanishing p-enstrophy dissip rate}, we prove the main theorem.

\section{Weak and renormalized solutions of the damped Euler and Navier-Stokes equations}\label{sec: weak and renormed sols}

For $1 \leq p \leq \infty$ and $m \in \N_0$, we denote by $L^p(\R^2)$ and $H^m(\R^2)$ the standard $L^p$ spaces and the $L^2$ based Sobolev spaces of functions on $\R^2$ with values in $\R$ or $\R^2$, depending on the context.\\
We then denote by $H$ the subspace of all velocity fields in $L^2(\R^2)$ which are weakly divergence-free, define $H^m \coloneqq H^m(\R^2) \cap H$ and denote its dual space by $H^{-m}$.\\
Then, we may introduce the following sets
\[\CalE_p \coloneqq \lbrace F = (F_1,F_2) \in L^2(\R^2) : \curl F \coloneqq \partial_{x_1}F_2 - \partial_{x_2}F_1 \in L^p(\R^2)\rbrace\]
for every $1 \leq p \leq \infty$. Note that $\CalE_p$ is a Banach space with norm $\|F\|_{\CalE_p} \coloneqq \|F\|_{L^2} + \|\curl F\|_{L^p}$, $F \in \CalE_p$.\\
We will frequently impose that some vector field $F$ is in $\CalE_1 \cap \CalE_p$ throughout this work. The assumption of being in $\CalE_1$ is usually made as it guarantees that $F$ is uniquely determined by its vorticity via the Biot-Savart law (the second equation in \eqref{eq: damped NSE}).\\
Finally, let us also mention here that the subscript $loc$ in our notation for spaces of continuous functions means that we endow this space with the topology of uniform convergence on compact intervals, while in the notation for Lebesgue spaces, this denotes integrability and the topology of convergence on compact subsets.
\hfill\\
Now, throughout this section, we fix $p \in (1,\infty)$ and assume, unless stated otherwise, for the right-hand side in \eqref{eq: damped NSE vel}, 
\begin{equation}\label{eq: rhs}
f \in \CalE_1 \cap \CalE_p \text{ and define } g \coloneqq \curl f.
\end{equation}
This particularly means that $f$ and $g$ do not depend on time. Also, for a given velocity field $u \in \CalE_p$, we will usually denote its vorticity by
\[\omega(u) \coloneqq \curl u \in L^p(\R^2).\]
Since we are ultimately interested in the inviscid limit $(\nu \to 0)$, we may assume throughout that $0 < \nu < 1$ and $\nu$ is fixed unless stated otherwise.\\
The inviscid analogue of \eqref{eq: damped NSE} generally cannot be interpreted in the weak sense if $p < \frac{4}{3}$ and other types of solution concepts have emerged in this case. The frequently used notion of renormalized solutions is the one we consider here and define next.

\begin{defi}\label{def: renorm sol}
Let $\omega_0 \in (L^1\cap L^p)(\R^2)$ and $\omega \in C_{loc}([0,\infty);(L^1 \cap L^p)(\R^2))$. Then $\omega$ is called a renormalized solution of the vorticity formulation of the damped Euler equations with initial data $\omega_0$ and right-hand side $g$ if for every $\beta \in C^1(\R^2)$, bounded with bounded derivative and vanishing in a neighbourhood of 0,
\[\int_0^\infty\int_{\R^2} \beta(\omega)(\partial_t\varphi + u\cdot\nabla\varphi)-\gamma\omega\beta'(\omega)\varphi\,dx\,dt + \int_{\R^2} \beta(\omega_0)\varphi(0)\,dx = -\int_0^\infty\int_{\R^2}\beta'(\omega)g\varphi\,dx\,dt,\]
where $\varphi \in C^\infty_c([0,\infty)\times \R^2)$ and 
\[u(t,x) = (K * \omega(t))(x) \text{ for } a.e.\, (t,x) \in (0,\infty)\times\R^2.\]
\end{defi}

\begin{remark}\label{rem: renormalized solutions}
\begin{enumerate}
\item[i)] One can also define a similar notion of  renormalized solutions of the vorticity formulation of the damped Navier-Stokes equations. This is unnecessary, however, since the notions of weak and renormalized solutions in this setting coincide as a well-known consequence of the consistency result in \cite{DPL89}.\\
Likewise, for $p \geq 2$, the notions of weak and renormalized solutions of the vorticity formulation of the damped Euler equations coincide.
\item[ii)] The classical DiPerna-Lions article \cite{DPL89} generally allows for a damping term. Most references related to the Euler equations to which we refer do not include our constant coefficient term and we remark here in general that these results may be proved analogously with some minor adaptations.
\item[iii)] It was proved in \cite{CS15} and \cite{CNSS17} that renormalized solutions to the vorticity formulation of the Euler equations can be constructed as weak-* limits in $L([0,\infty);L^p(\R^2))$ of (subsequences of) solutions of the vorticity formulation of the Navier-Stokes equations in the inviscid limit. We also state this fact in \Cref{thm: weak convergence in lp}. The weak solutions of the Navier-Stokes equations on the other hand can be constructed by standard methods such as Galerkin approximations or approximations by solving related smoothed equations from which, in either case, the (unique) weak solution to the Cauchy problem inherits several properties that we state in \Cref{thm: NSE existence and uniqueness}.
\end{enumerate}
\end{remark}

\begin{thm}\label{thm: NSE existence and uniqueness}
Let $u_0 \in \CalE_1 \cap \CalE_p$. Then there exists a unique weak solution $u \in C_{loc}([0,\infty);\CalE_1 \cap \CalE_p)$ of the damped Navier-Stokes equations with initial data $u_0$ and right-hand side $f$ as in \eqref{eq: rhs}. Moreover,
\begin{itemize}[label={-}]
\item $\omega(u) \in C_{loc}(\R;(L^1 \cap L^p)(\R^2))$ is a weak solution of the vorticity formulation of the damped Navier-Stokes equations with initial data $\omega(u_0)$ and right-hand side $g$,
\item for all $M > 0$ and some $L > 0$ (independently of $0 < \nu < 1$!),
\begin{equation}\label{eq: NSE bdd time-derivative}
\|\partial_t u\|_{L^\infty(0,M;H^{-L}_{loc})} \leq C(M,\gamma,\|u_0\|_{L^2},\|f\|_{L^2}),
\end{equation}
\item a.e. on $[0,\infty)$, with equality for $p \geq 2$,
\begin{equation}\label{eq: NSE energy equ}
\begin{split}
&\frac{d}{dt} \int_{\R^2} |\omega(u)|^p\,dx + \gamma p \int_{\R^2}|\omega(u)|^p\,dx\\ 
&\qquad\leq -4\nu \frac{p-1}{p}\int_{\R^2} |\nabla |\omega(u)|^\frac{p}{2}|^2\,dx + p\int_{\R^2}g\omega(u)|\omega(u)|^{p-2}\,dx,
\end{split}
\end{equation}
\item for all $M > 0$,
\begin{equation}\label{eq: NSE energy ineq}
\begin{split}
&\|u\|_{L^\infty(0,M;L^2)} + \nu\|\nabla u\|_{L^2(0,M;L^2)} \leq C(\gamma,\|u_0\|_{L^2},M^\frac{1}{2}\|f\|_{L^2}),\\
&\|\omega(u)\|_{L^\infty(0,M;L^p)} + \nu\|\nabla|\omega(u)|^\frac{p}{2}\|_{L^2(0,M;L^2)}^\frac{2}{p} \leq C( \gamma,\|\omega(u_0)\|_{L^p},M^\frac{1}{p}\|g\|_{L^p}),
\end{split}
\end{equation}
\item for all $t \geq 0$ and $1 \leq q \leq p$ (independently of $0 < \nu < 1$!),
\begin{equation}\label{eq: NSE vort lp bound}
\begin{split}
&\|u(t)\|_{L^2} \leq e^{-\gamma t}(\|u_0\|_{L^2} - \frac{1}{\gamma}\|f\|_{L^2}) + \frac{1}{\gamma}\|f\|_{L^2},\\
&\|\omega(u)(t)\|_{L^q} \leq e^{-\gamma t}(\|\omega(u_0)\|_{L^q} - \frac{1}{\gamma}\|g\|_{L^q}) + \frac{1}{\gamma}\|g\|_{L^q}.
\end{split}
\end{equation}
\end{itemize}
\end{thm}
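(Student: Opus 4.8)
\section*{Proof proposal for \Cref{thm: NSE existence and uniqueness}}

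The plan is to treat the five listed properties as consequences of a single approximation scheme together with an a priori estimate that is uniform in the viscosity. First I would construct solutions by a smoothing/regularization procedure: mollify the initial datum and the forcing, solve the resulting smooth damped Navier–Stokes system (where classical parabolic theory yields global smooth solutions), and then pass to the limit. The key point throughout is that every bound must be tracked carefully in its dependence on $\nu$, since several of the asserted estimates are claimed to hold \emph{independently of} $0<\nu<1$. The uniqueness statement I would handle separately via the $L^p$ stability of the transport structure, which is where the DiPerna–Lions renormalization theory (invoked in \Cref{rem: renormalized solutions}) enters: two weak vorticity solutions with the same data coincide because both are renormalized and the difference satisfies a transport inequality with the shared velocity field.

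The heart of the matter is the $p$-enstrophy differential inequality \eqref{eq: NSE energy equ}. For smooth approximations I would multiply the vorticity equation by $|\omega|^{p-2}\omega$ and integrate over $\R^2$. The transport term $\int (u\cdot\nabla\omega)|\omega|^{p-2}\omega\,dx$ vanishes because $u$ is divergence-free (it is a perfect derivative $\tfrac1p\diverg(u|\omega|^p)$), the damping term produces $\gamma p\int|\omega|^p\,dx$, and the viscous term, after integration by parts, yields exactly $-4\nu\tfrac{p-1}{p}\int|\nabla|\omega|^{p/2}|^2\,dx$ using the identity $\int|\omega|^{p-2}\omega\Delta\omega\,dx = -\tfrac{4(p-1)}{p^2}\int|\nabla|\omega|^{p/2}|^2\,dx$. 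The forcing term gives $p\int g\,\omega|\omega|^{p-2}\,dx$. This produces equality at the smooth level; upon passing to the limit the viscous and dissipation terms are only lower semicontinuous, so one recovers the inequality for general $1<p<\infty$ and retains equality precisely when $p\geq 2$, where weak and renormalized solutions coincide and no regularity is lost.

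From \eqref{eq: NSE energy equ} the remaining bounds follow by elementary ODE comparison and integration. Dropping the nonnegative dissipation term and estimating $p\int g\omega|\omega|^{p-2}\,dx \leq \gamma p\int|\omega|^p + C(\gamma)\|g\|_{L^p}^p$ by Young's inequality gives, via Gr\"onwall, the $L^q$ decay bound \eqref{eq: NSE vort lp bound} for $q=p$; the case $1\leq q<p$ follows by the same argument applied to the corresponding renormalization, and the velocity bound is the analogous energy estimate obtained by testing \eqref{eq: damped NSE vel} with $u$. Integrating \eqref{eq: NSE energy equ} in time over $[0,M]$ and keeping the dissipation term yields the space-time bounds \eqref{eq: NSE energy ineq}, where the crucial observation is that all constants depend on the data only through $\|\omega(u_0)\|_{L^p}$, $\|g\|_{L^p}$ and $M$, not on $\nu$. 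The time-derivative bound \eqref{eq: NSE bdd time-derivative} I would obtain by pairing the velocity equation against test functions in $H^L_{loc}$ for $L$ large enough to control the nonlinearity $(u\cdot\nabla)u$ and the viscous term in a negative-order local Sobolev norm, again uniformly in $\nu$ since the viscous contribution $\nu\Delta u$ is bounded by the $\nu$-uniform energy.

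The main obstacle I anticipate is making the passage to the limit rigorous for $p<2$, both for existence and for uniqueness. For existence one must ensure that the nonlinear term $u\cdot\nabla\omega$ passes to the limit, which requires strong compactness of the approximate velocities; this is delicate because for $p<2$ the vorticity lies only in $L^p$ and the Biot–Savart operator does not gain enough integrability to make $u$ bounded. The resolution is to combine the uniform bounds \eqref{eq: NSE energy ineq} with the uniform time-derivative bound \eqref{eq: NSE bdd time-derivative} and apply an Aubin–Lions–Simon compactness argument locally in space, yielding strong $L^2_{loc}$ convergence of the velocities sufficient to identify the weak limit of the product. For uniqueness one must invoke the renormalization property to transform the nonlinear transport into a controllable form; this is precisely the step for which \Cref{rem: renormalized solutions} and the DiPerna–Lions theory are indispensable, and I would structure the argument so that uniqueness is deduced at the level of the renormalized vorticity equation rather than the velocity formulation.
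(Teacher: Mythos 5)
The paper never actually proves this theorem: \Cref{rem: renormalized solutions}~(iii) quotes it as a standard consequence of constructing the weak solution by Galerkin or smoothed approximations, and your overall scheme (mollify the data, test the vorticity equation with $|\omega|^{p-2}\omega$, track $\nu$-uniformity, Aubin--Lions for $p<2$) is exactly the standard route being invoked; your integration-by-parts identity for the viscous term is correct. However, your derivation of \eqref{eq: NSE vort lp bound} fails as written. The estimate $p\int g\omega|\omega|^{p-2}\,dx \le \gamma p\int|\omega|^p\,dx + C(\gamma)\|g\|_{L^p}^p$ consumes the \emph{entire} damping term, so Gr\"onwall only yields $\|\omega(t)\|_{L^p}^p \le \|\omega(u_0)\|_{L^p}^p + C(\gamma)\|g\|_{L^p}^p\,t$, a linearly growing bound with no decay whatsoever; even an optimally tuned Young splitting recovers at best the asymptotic value $\|g\|_{L^p}/\gamma$ but, by the power-mean inequality, a pointwise bound strictly weaker than the stated convex combination. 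The sharp form of \eqref{eq: NSE vort lp bound} is not cosmetic: Section 3 uses it verbatim, both for $\sup_{t\in\R}\|u(t)\|_{L^2}\le\|f\|_{L^2}/\gamma$ on $\CalK$ and in the definition of $\CalK_p$, which demands exactly $\sup_{t\in\R}\|\omega(u)(t)\|_{L^q}\le\|g\|_{L^q}/\gamma$. The correct elementary step is H\"older rather than Young: $p\int g\omega|\omega|^{p-2}\,dx \le p\|g\|_{L^p}\|\omega\|_{L^p}^{p-1}$, then divide the differential inequality by $p\|\omega\|_{L^p}^{p-1}$ to obtain $\frac{d}{dt}\|\omega\|_{L^p}+\gamma\|\omega\|_{L^p}\le\|g\|_{L^p}$ and integrate this linear ODE.

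Your uniqueness argument has a second, more conceptual gap: the difference of two vorticity solutions is \emph{not} transported by a shared velocity field. If $\omega_1,\omega_2$ solve the vorticity equation with $u_i = K*\omega_i$, then $\bar\omega \coloneqq \omega_1-\omega_2$ satisfies $\partial_t\bar\omega + u_1\cdot\nabla\bar\omega + \gamma\bar\omega - \nu\Delta\bar\omega = -(u_1-u_2)\cdot\nabla\omega_2$, and the commutator-type term on the right is precisely what renormalization cannot absorb when $\omega_2$ is merely in $(L^1\cap L^p)(\R^2)$ --- this is the same obstruction that leaves uniqueness for 2D Euler with $L^p$ vorticity open, so a DiPerna--Lions transport argument cannot close at this level. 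Fortunately, for $\nu>0$ none of this is needed: since $u_0\in H$ and $f\in L^2(\R^2)$, the solution is a 2D Leray--Hopf solution with $\nabla u\in L^2(0,M;L^2)$ for each fixed $\nu$, and the classical energy-method uniqueness applies at the velocity level (Ladyzhenskaya's $L^4$ interpolation handles the nonlinearity; the damping term has a favorable sign). The role of DiPerna--Lions here is only the one stated in \Cref{rem: renormalized solutions}~(i): to identify weak vorticity solutions of the damped Navier--Stokes equations as renormalized, which is what justifies \eqref{eq: NSE energy equ} for $1<p<2$, where $\beta(s)=|s|^p$ has $\beta''$ singular at $0$ and direct testing is inadmissible --- there, too, your appeal to lower semicontinuity gives the inequality, but the equality claim for $p\ge 2$ deserves the justification that $|s|^p$ is then an admissible renormalization on the solution itself rather than only on the approximations.
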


\begin{thm}\label{thm: EE existence}
Let $\omega \in C_{loc}([0,\infty);(L^1 \cap L^p)(\R^2))$ be a renormalized solution of the vorticity formulation of the damped Euler equations. Then
\begin{equation}\label{eq: EE energy equ}
\frac{d}{dt} \int_{\R^2} |\omega|^p\,dx + \gamma p \int_{\R^2}|\omega|^p\,dx = p\int_{\R^2}g\omega|\omega|^{p-2}\,dx
\end{equation}
a.e. on $[0,\infty)$.
\end{thm}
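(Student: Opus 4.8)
The plan is to run the standard renormalization argument: test \Cref{def: renorm sol} with functions that are asymptotically constant in space and compactly supported in time, and then choose the renormalization $\beta$ so as to approximate $s\mapsto |s|^p$. First I would insert $\varphi(t,x)=\psi(t)\chi_R(x)$ into the weak formulation, where $\psi\in C_c^\infty((0,\infty))$ and $\chi_R\in C_c^\infty(\R^2)$ satisfies $\chi_R\equiv 1$ on $B_R$, $\supp\chi_R\subset B_{2R}$ and $|\nabla\chi_R|\le C/R$. Choosing $\psi$ with support in the open interval discards the initial-datum term, which is irrelevant for an a.e.\ statement. The terms involving $\partial_t\varphi$, the damping and the source then converge, as $R\to\infty$, to their counterparts with $\chi_R$ replaced by $1$: since $\beta$ and $\beta'$ are bounded and vanish on a neighbourhood $\{|s|<\delta\}$ of $0$, the integrands $\beta(\omega)$, $\omega\beta'(\omega)$ and $\beta'(\omega)g$ are dominated by $\|\beta\|_\infty\mathds{1}_{\{|\omega|\ge\delta\}}$, $\|\beta'\|_\infty|\omega|$ and $\|\beta'\|_\infty|g|$, all lying in $L^1(\R^2)$ and bounded uniformly on $\supp\psi$ because $\omega,g\in L^1(\R^2)$ and $\omega\in C_{loc}([0,\infty);L^1\cap L^p)$, so dominated convergence applies.

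The one term requiring care, and the main obstacle in this reduction, is the transport term $\int_0^\infty\psi\int_{\R^2}\beta(\omega)\,u\cdot\nabla\chi_R\,dx\,dt$, which must vanish as $R\to\infty$. Because $\beta$ vanishes on $\{|s|<\delta\}$, the inner integrand is supported in the annulus $A_R=B_{2R}\setminus B_R$ intersected with $\{|\omega(t)|\ge\delta\}$, a set whose measure is at most $\|\omega(t)\|_{L^1}/\delta$ and which, being the far part of a finite-measure set, tends to $0$ as $R\to\infty$. Writing $u=K*\omega\in L^q(\R^2)$ for some $q\in(1,\infty)$ (as furnished by the Biot--Savart law for $\omega\in L^1\cap L^p$) and applying Hölder's inequality gives
\[ \left|\int_{\R^2}\beta(\omega(t))\,u(t)\cdot\nabla\chi_R\,dx\right| \le \frac{C\|\beta\|_\infty}{R}\,\|u(t)\|_{L^q}\,\big|A_R\cap\{|\omega(t)|\ge\delta\}\big|^{1/q'} \xrightarrow{R\to\infty}0, \]
uniformly for $t\in\supp\psi$. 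Hence this term drops out, leaving, for every admissible $\beta$, the distributional identity on $(0,\infty)$
\[ \frac{d}{dt}\int_{\R^2}\beta(\omega)\,dx = -\gamma\int_{\R^2}\omega\beta'(\omega)\,dx + \int_{\R^2}\beta'(\omega)g\,dx, \]
whose right-hand side is continuous in $t$ by continuity of $t\mapsto\omega(t)$ in $L^1\cap L^p$.

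Finally I would specialize $\beta$ to a sequence converging to $|\cdot|^p$. Defining $\beta_n$ through $\beta_n'(s)=p|s|^{p-2}s\,\theta_n(s)$, with smooth even cutoffs $0\le\theta_n\le1$ equal to $1$ for $2/n\le|s|\le n$ and vanishing on $\{|s|\le 1/n\}\cup\{|s|\ge 2n\}$, produces for each $n$ a function $\beta_n\in C^1(\R)$ that is bounded, has bounded derivative and vanishes near $0$, hence is admissible. The structurally crucial feature is $|\beta_n'(s)|\le p|s|^{p-1}$, which forces $0\le\beta_n(s)\le|s|^p$ and $0\le s\beta_n'(s)\le p|s|^p$; this is exactly what prevents the truncation near $|s|=\infty$ from producing an uncontrolled contribution in the damping integral. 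As $n\to\infty$ one has $\beta_n(s)\to|s|^p$, $s\beta_n'(s)\to p|s|^p$ and $\beta_n'(s)\to p|s|^{p-2}s$ pointwise, and dominated convergence applies to the three integrals with dominants $|\omega|^p$, $p|\omega|^p\in L^1$ and $p|\omega|^{p-1}|g|\in L^1$, the last by Hölder's inequality with $\big\||\omega|^{p-1}\big\|_{L^{p'}}=\|\omega\|_{L^p}^{p-1}$.

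Passing to the limit in the distributional identity above (its integrands being dominated uniformly on $\supp\psi$) then yields
\[ \frac{d}{dt}\int_{\R^2}|\omega|^p\,dx = -\gamma p\int_{\R^2}|\omega|^p\,dx + p\int_{\R^2}|\omega|^{p-2}\omega\,g\,dx \]
in the sense of distributions on $(0,\infty)$; since both sides are continuous in $t$, the function $t\mapsto\int_{\R^2}|\omega|^p\,dx$ is $C^1$ and the identity holds everywhere, in particular a.e.\ on $[0,\infty)$, which is \eqref{eq: EE energy equ}.
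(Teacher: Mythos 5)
Your argument is correct. The paper gives no proof of this theorem at all — it is stated as a known consequence of DiPerna--Lions renormalization theory, with Remark~\ref{rem: renormalized solutions}~ii) noting that the cited results adapt to the damped setting — and your write-up is exactly the standard argument from that literature: spatial cutoff with the transport term killed via the Biot--Savart integrability of $u$ and the vanishing of $\beta$ near $0$, followed by an admissible approximation of $s\mapsto|s|^p$ with the domination $|\beta_n'(s)|\le p|s|^{p-1}$ justifying the passage to the limit.
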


The weak-* convergence in $L^\infty([0,\infty);L^p(\R^2))$ of weak vorticity solutions of the Navier-Stokes equations in the inviscid limit $(\nu \to 0)$ to renormalized solutions of the Euler equations was improved recently in \cite{NLSW21,CCS21}, which we state in the following theorem. We note that this remains true in our case of an additional constant coefficient damping term, strongly converging right-hand sides and in case one considers viscosity $\nu$ which converges to some positive number, i.e., better behaved convergence of Navier-Stokes solutions to Navier-Stokes solutions.

\begin{thm}\label{thm: strong convergence in lp}
Let $(\nu^k)_{k\in\N} \subset (0,1)$ be a sequence converging to some $\nu \geq 0$ and let $(u_0^{\nu^k})_{k\in\N}, (f^{\nu^k})_{k\in\N}$ be bounded sequences of initial data and right-hand sides in $\CalE_1 \cap \CalE_p$ with associated weak solutions $(u^{\nu^k})_{k\in\N}$ as in \Cref{thm: NSE existence and uniqueness}, where $\nu^k$ is the viscosity constant for every $k \in \N$. We suppose that for some $u_0^\nu,f^\nu \in \CalE_1 \cap \CalE_p$,
\[
\omega(u_0^{\nu^k}) \to \omega(u_0^\nu)\,(k \to \infty) \text{ in } L^p(\R^2)
\]
and for $g^{\nu^k} \coloneqq \curl f^{\nu^k}$, $g^\nu \coloneqq \curl f^\nu$
\[
g^{\nu^k} \to g^\nu\,(k \to \infty) \text{ in } L^p(\R^2).
\]
Then, after passing to a subsequence, there exists $u^\nu$ such that
\begin{align}
&u^{\nu^k} \overset{\ast}{\rightharpoonup} u^\nu\,(k \to \infty) \text{ in } L^\infty([0,\infty);L^2(\R^2)),\\
&\omega(u^{\nu^k}) \to \omega(u^\nu) \,(k \to \infty)\text{ in } C_{loc}([0,\infty);L^p(\R^2)),
\end{align}
where $\omega(u^\nu)$ satisfies the vorticity formulation of the damped Euler $(\nu = 0)$ or Navier-Stokes $(\nu > 0)$ equations (in the renormalized sense) with initial data $\omega(u_0^\nu)$ and right-hand side $g^\nu$.
\end{thm}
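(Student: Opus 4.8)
The plan is to extract weak limits from the $\nu$-uniform bounds of \Cref{thm: NSE existence and uniqueness}, to promote the weak convergence of the velocities to strong convergence via the smoothing of the Biot--Savart law, and then to upgrade the weak convergence of the vorticities to strong convergence by a quantitative stability argument for the advection--diffusion equation they solve. First I would collect the a priori estimates: by the bounds in \eqref{eq: NSE vort lp bound}, which are explicitly independent of $0<\nu<1$, the vorticities $\omega(u^{\nu^k})$ are bounded in $L^\infty([0,\infty);(L^1\cap L^p)(\R^2))$ and the velocities $u^{\nu^k}$ in $L^\infty([0,\infty);L^2(\R^2))$, uniformly in $k$. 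Since $L^\infty([0,\infty);L^2(\R^2))$ is the dual of a separable space, Banach--Alaoglu produces, along a subsequence, $u^{\nu^k}\overset{\ast}{\rightharpoonup}u^\nu$ in $L^\infty([0,\infty);L^2(\R^2))$ and $\omega(u^{\nu^k})\overset{\ast}{\rightharpoonup}\omega^\nu$ in $L^\infty([0,\infty);L^p(\R^2))$ with $\omega^\nu=\curl u^\nu$. The decisive spatial gain of regularity comes from the Biot--Savart law: each component of $\nabla u^{\nu^k}=\nabla K*\omega(u^{\nu^k})$ is a Calder\'on--Zygmund transform of the vorticity, so the $u^{\nu^k}$ are also bounded in $L^\infty([0,\infty);W^{1,p}_{loc}(\R^2))$. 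Together with the $\nu$-uniform time-equicontinuity furnished by the bound \eqref{eq: NSE bdd time-derivative} on $\partial_t u^{\nu^k}$, an Aubin--Lions--Simon argument then upgrades the weak-$\ast$ convergence of the velocities to strong convergence $u^{\nu^k}\to u^\nu$ in $C_{loc}([0,\infty);L^2_{loc}(\R^2))$.

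The heart of the matter, and the step I expect to be the main obstacle, is the strong convergence of the vorticities, which has to rule out the oscillation and concentration that weak-$\ast$ convergence alone tolerates. Here I would use that $\omega(u^{\nu^k})$ solves $\partial_t\omega(u^{\nu^k})+u^{\nu^k}\cdot\nabla\omega(u^{\nu^k})-\nu^k\Delta\omega(u^{\nu^k})=g^{\nu^k}-\gamma\omega(u^{\nu^k})$, driven by a velocity field that is locally of Sobolev class $W^{1,p}$ with a uniformly bounded norm. Exactly this structure is exploited by the quantitative stability estimates for transport--diffusion equations developed in \cite{NLSW21,CCS21}: comparing $\omega(u^{\nu^k})$ with the solution of the limiting equation driven by $u^\nu,g^\nu$ in a suitable logarithmic Kantorovich-type distance, one bounds the resulting defect by the velocity discrepancy $u^{\nu^k}-u^\nu$, the viscosity discrepancy $\abs{\nu^k-\nu}$, and the initial-data and forcing discrepancies. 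The damping $-\gamma\omega$ is a bounded linear source and the forcings enter only through $g^{\nu^k}-g^\nu$; by Duhamel and Gr\"onwall both are harmlessly absorbed, so that the hypotheses $\omega(u_0^{\nu^k})\to\omega(u_0^\nu)$ and $g^{\nu^k}\to g^\nu$ in $L^p(\R^2)$, the strong velocity convergence of the previous step, and $\nu^k\to\nu$ together drive the defect to zero. To convert the logarithmic distance into the $L^p$ norm I would invoke the equi-integrability of $(\abs{\omega(u^{\nu^k})}^p)_k$, which follows from the uniform $L^p$ bound because $p>1$, together with spatial tightness, which the $\CalE_1\cap\CalE_p$ setting supplies through the uniform $L^1$ control of the vorticity; combined with the time-equicontinuity this yields $\omega(u^{\nu^k})\to\omega^\nu$ in $C_{loc}([0,\infty);L^p(\R^2))$, whence in particular $\omega^\nu=\omega(u^\nu)$.

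With both strong convergences in hand the limit is readily identified. The strong convergence $\omega(u^{\nu^k})\to\omega(u^\nu)$ in $L^p$ lets the terms nonlinear in the vorticity in \Cref{def: renorm sol}---$\beta(\omega)$, $\omega\beta'(\omega)$ and $\beta'(\omega)g$---pass to the limit, and the strong velocity convergence handles the transport term $\beta(\omega)\,u\cdot\nabla\varphi$. That the resulting object is a renormalized solution of the damped Euler ($\nu=0$) or Navier--Stokes ($\nu>0$) equations with data $\omega(u_0^\nu)$ and forcing $g^\nu$---in particular that the viscous terms behave correctly in the limit---is precisely the content of the weak-convergence theory already available in \Cref{thm: weak convergence in lp} and the DiPerna--Lions framework \cite{DPL89,CS15,CNSS17}, which the present strong convergence only sharpens. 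I expect the genuinely delicate point to remain the vorticity-stability estimate of the second paragraph; relative to \cite{NLSW21,CCS21} the novelties---the damping term, the varying right-hand sides, and a possibly strictly positive limiting viscosity---are perturbative, the last in fact only improving compactness through genuine parabolic smoothing.
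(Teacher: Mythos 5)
The paper itself gives no proof of this theorem: it is imported from \cite{NLSW21,CCS21} together with the remark that the damping term, the strongly converging right-hand sides and a positive limiting viscosity are harmless modifications, and the only proof-like material in the paper is the later observation (after \Cref{prop: attractor lp vort}) that for $p\ge 2$ one can bypass the quantitative stability machinery entirely and argue via the energy balance \eqref{eq: NSE energy equ}, weak lower semicontinuity of the gradient term and uniform convexity of $L^p$. Your architecture --- uniform bounds from \eqref{eq: NSE vort lp bound}, Calder\'on--Zygmund control of $\nabla u^{\nu^k}$ by $\omega(u^{\nu^k})$, Aubin--Lions for the velocities, then stability of the advection--diffusion equation for the vorticities --- is consistent with those references, and your assessment that the new features are perturbative matches the paper's claim.

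There is, however, one step in your second paragraph that is wrong as stated and sits exactly at the crux. You assert that equi-integrability of $(\abs{\omega(u^{\nu^k})}^p)_k$ ``follows from the uniform $L^p$ bound because $p>1$.'' It does not: boundedness in $L^p$ gives equi-integrability of $\abs{\omega}^q$ only for $q<p$, and $\abs{\omega^{\nu^k}}^p$ can perfectly well concentrate (take $\omega^{\nu^k}=k^{2/p}\chi_{B_{1/k}}$, bounded in $L^1\cap L^p$); excluding precisely this concentration is the entire point of upgrading weak-$\ast$ to strong $C_tL^p_x$ convergence, so the claim cannot be waved through. The same applies to your tightness claim: a uniform $L^1$ bound does not prevent $\abs{\omega^{\nu^k}}^p$ from escaping to spatial infinity. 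In \cite{NLSW21,CCS21} both properties are not free consequences of the uniform bounds but are \emph{propagated from the initial data}: since $\omega(u_0^{\nu^k})\to\omega(u_0^\nu)$ strongly in $L^p$, the family $(\abs{\omega(u_0^{\nu^k})}^p)_k$ is equi-integrable and tight by Vitali/de la Vall\'ee Poussin, and one transports a uniform bound on $\int\Phi(\abs{\omega^{\nu^k}(t)}^p)\,dx$ for a suitable superlinear convex $\Phi$ along the flow, using the renormalization structure (or a comparison principle for the transport--diffusion equation) together with the convergence of the forcings. Without this ingredient, or the $p\ge2$ energy-balance alternative the paper points to, your passage from the Kantorovich-type distance to the $L^p$ norm has a genuine hole; the remainder of the sketch, including the identification of the renormalized limit equation, is fine modulo the cited references.
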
 

We also state the related main theorem from \cite{CS15}, adapted to our situation. Although it appears to be weaker, we give the theorem in a slightly improved and very useful form of only requiring weak convergence of the initial data opposed to strong convergence as originally stated in \cite{CS15}, which we explain below.

\begin{thm}\label{thm: weak convergence in lp}
Let $(\nu^k)_{k\in\N} \subset (0,1)$ be a sequence converging to $0$ and let $(u_0^{\nu^k})_{k\in\N},(f^{\nu^k})_{k\in\N}$ be bounded sequences of initial data and right-hand sides in $\CalE_1 \cap \CalE_p$ with associated weak solutions $(u^{\nu^k})_{k\in\N}$ as in \Cref{thm: NSE existence and uniqueness}, where $\nu^k$ is the viscosity constant for every $k \in \N$. We suppose that for some $u_0,f \in \CalE_1 \cap\CalE_p$,
\[
\omega(u_0^{\nu^k}) \rightharpoonup \omega(u_0)\,(k \to \infty) \text{ in } L^p(\R^2)
\]
and for $g^{\nu^k} \coloneqq \curl f^{\nu^k}$, $g \coloneqq \curl f$,
\[
g^{\nu^k} \to g\,(k \to \infty) \text{ in } L^p(\R^2).
\]
Then, after passing to a subsequence, there exists $u$ such that
\begin{align}
&u^{\nu^k} \overset{\ast}{\rightharpoonup} u\,(k \to \infty) \text{ in } L^\infty([0,\infty);L^2(\R^2)),\\
&\omega(u^{\nu^k}) \overset{\ast}{\rightharpoonup} \omega(u)\,(k \to \infty) \text{ in } L^\infty([0,\infty);L^p(\R^2)),
\end{align}
where $\omega(u)$ satisfies the vorticity formulation of the damped Euler equations in the renormalized sense with initial data $\omega(u_0)$ and right-hand side $g$.
\end{thm}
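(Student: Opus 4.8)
The plan is to combine the uniform a priori bounds of \Cref{thm: NSE existence and uniqueness} with the limit passage already carried out in \cite{CS15}, and to isolate the single point where strong convergence of the initial data enters there, namely the identification of the initial datum of the limit, which I will instead recover in a weak topology. Since $(\omega(u_0^{\nu^k}))_k$ and $(g^{\nu^k})_k$ are bounded in $L^p(\R^2)$ and $(u_0^{\nu^k})_k$ is bounded in $L^2(\R^2)$, the estimates \eqref{eq: NSE energy ineq}, \eqref{eq: NSE vort lp bound} and \eqref{eq: NSE bdd time-derivative} are uniform in $k$. Hence $(u^{\nu^k})_k$ is bounded in $L^\infty([0,\infty);L^2(\R^2))$, $(\omega(u^{\nu^k}))_k$ is bounded in $L^\infty([0,\infty);(L^1\cap L^p)(\R^2))$, and $(\partial_t u^{\nu^k})_k$ is bounded in $L^\infty(0,M;H^{-L}_{loc})$ for every $M>0$. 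Passing to a subsequence, I obtain $u^{\nu^k}\overset{\ast}{\rightharpoonup}u$ in $L^\infty([0,\infty);L^2(\R^2))$ and $\omega(u^{\nu^k})\overset{\ast}{\rightharpoonup}\omega(u)$ in $L^\infty([0,\infty);L^p(\R^2))$, with $\omega(u)=\curl u$ by continuity of $\curl$ on distributions.

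Next I would show that $\omega(u)$ is a renormalized solution of the damped Euler equations on the open interval $(0,\infty)$. By the Biot-Savart law and Calder\'on-Zygmund theory the velocities $(u^{\nu^k})_k$ gain one spatial derivative uniformly, so they are bounded in $L^\infty(0,M;W^{1,p}_{loc})$; together with the uniform bound \eqref{eq: NSE bdd time-derivative} the Aubin-Lions-Simon lemma yields, along a further subsequence, $u^{\nu^k}\to u$ in $C([0,M];L^2_{loc}(\R^2))$ for every $M>0$. This strong convergence of the velocity, the vanishing of the viscous contribution as $\nu^k\to 0$, and the requisite strong compactness of the vorticity (for $p<\frac{4}{3}$ one works with the renormalized formulation and uses the compactness established in \cite{CS15}) allow the limit passage in the renormalized equation and identify $\omega(u)\in C_{loc}([0,\infty);(L^1\cap L^p)(\R^2))$ as a renormalized solution with right-hand side $g$. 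The crucial observation is that this limit passage only tests against $\varphi$ supported in $t>0$, so it is entirely insensitive to the behaviour of the initial data; the argument of \cite{CS15} therefore applies verbatim on $(0,\infty)$, independently of whether the initial vorticities converge strongly or only weakly.

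It remains to identify the initial datum, and this is the only step where I depart from \cite{CS15}. The convergence $u^{\nu^k}\to u$ in $C([0,M];L^2_{loc}(\R^2))$ holds up to and including $t=0$, so $u^{\nu^k}(0)\to u(0)$ in $L^2_{loc}(\R^2)$ and hence in $\mathcal{D}'(\R^2)$. On the other hand $u^{\nu^k}(0)=u_0^{\nu^k}$, and the assumed weak convergence $\omega(u_0^{\nu^k})\rightharpoonup\omega(u_0)$ in $L^p(\R^2)$ forces, via Biot-Savart and the uniform $L^2$-bound, $u_0^{\nu^k}\rightharpoonup u_0$ in $L^2(\R^2)$, in particular in $\mathcal{D}'(\R^2)$. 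By uniqueness of distributional limits $u(0)=u_0$, whence $\omega(u)(0)=\omega(u_0)$; since $\omega(u)$ is continuous into $(L^1\cap L^p)(\R^2)$, this pins down its initial datum as $\omega(u_0)$. The point is that the identification takes place in a weak topology, which does not distinguish weak from strong $L^p$-convergence of the initial vorticity, and that at the fixed time $t=0$ the uniform $W^{1,p}_{loc}$-bound even upgrades the weak $L^2$-convergence of $u_0^{\nu^k}$ to strong $L^2_{loc}$-convergence by compact embedding.

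The main obstacle is to decouple the interior renormalized limit passage from the initial layer: in the regime $p<\frac{4}{3}$ that passage rests on the delicate strong vorticity compactness of \cite{CS15}, which a priori one might fear is spoiled by a concentration defect carried by the only weakly convergent initial data. The above observation resolves this by noting that the renormalized formulation on $(0,\infty)$ never sees the initial time, so the compactness argument of \cite{CS15} is unaffected, while the datum is recovered robustly and separately in the weak topology as described. Everything else is a routine matter of choosing a common subsequence so that the weak-$\ast$ limits, the $C_{loc}([0,\infty);L^2_{loc})$ limit, and the identified initial datum are mutually consistent.
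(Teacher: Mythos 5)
Your proposal has a genuine gap at its central step, namely the assertion that ``the argument of \cite{CS15} applies verbatim on $(0,\infty)$'' because ``the limit passage in the renormalized equation only tests against $\varphi$ supported in $t>0$''. This mischaracterizes \cite{CS15}: for $1<p<2$ no direct limit passage in the renormalized equation is performed there, since that would require $\beta(\omega(u^{\nu^k}))\to\beta(\omega(u))$, i.e.\ strong or a.e.\ convergence of the vorticities, which \cite{CS15} does \emph{not} establish --- interior strong compactness of the vorticity is precisely the content of the later works \cite{NLSW21,CCS21}, i.e.\ of \Cref{thm: strong convergence in lp}, and those results \emph{assume} strong convergence of the initial data, so you cannot invoke them here without circularity. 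What \cite{CS15} actually does is a duality argument: the linear vorticity equation is tested against solutions $\phi^\nu$ of the backward transport--diffusion problem with zero final datum at $t=M$, and the resulting identity contains the pairing $\int_{\R^2}\phi^\nu(x,0)\,\omega(u_0^\nu)(x)\,dx$. This term cannot be removed by restricting to test functions supported in $t>0$: the dual functions are constructed globally backward from $t=M$, and their value at $t=0$ paired with the initial vorticity is exactly what characterizes the limit as the Lagrangian (hence renormalized) solution emanating from $\omega(u_0)$. Without this, your second step also loses its footing: the identification $u(0)=u_0$ in $\mathcal{D}'$ only yields $\omega(u)(t)\rightharpoonup\omega(u_0)$ as $t\to0^+$, whereas \Cref{def: renorm sol} demands \emph{strong} attainment of the datum (it involves $\beta(\omega_0)$ and membership in $C_{loc}([0,\infty);(L^1\cap L^p)(\R^2))$); the strong continuity up to $t=0$ that you invoke to ``pin down'' the datum is itself a consequence of the renormalization/Lagrangian structure you have not yet obtained, and a concentration-type initial layer with $\liminf_{t\to0^+}\|\omega(u)(t)\|_{L^p}>\|\omega(u_0)\|_{L^p}$ is not excluded by your argument.

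The paper's proof is far more localized: it keeps the duality scheme of \cite{CS15} intact and observes that strong convergence of the initial data enters only in the single pairing \eqref{eq: init data}. Since the dual problem has a \emph{fixed} (zero) final datum, the strong-convergence machinery of \cite{NLSW21,CCS21} can legitimately be applied to $(\phi^\nu)_{\nu}$ itself, upgrading $\phi^\nu(\cdot,0)\to\phi(\cdot,0)$ to strong convergence in $L^q(\R^2)$, $q=\frac{p}{p-1}$; a strongly convergent factor paired with a weakly convergent one then passes to the limit. If you wish to salvage your route, this is the ingredient you must supply: some mechanism converting weak convergence of the data into the renormalization property of the limit, and shifting the burden of strong convergence onto the dual test functions is exactly such a mechanism. (For $p\ge2$ your argument essentially works, since weak and renormalized solutions coincide and the initial term of the weak formulation is linear in $\omega_0$; the entire difficulty of the theorem sits in $1<p<2$.)
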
 

In the proof given in \cite{CS15}, strong convergence of the initial data is only required in (3.10) to pass to the limit in the integral 
\begin{equation}\label{eq: init data}
\int_{\R^2} \phi^{\nu}(x,0)\omega(u_0^{\nu})(x)\,dx,
\end{equation}
where $\phi^\nu$ is the unique solution to the backward transport-diffusion problem 
\begin{equation}
\begin{split}
-\partial_t\phi^\nu -\nu\Delta\phi^\nu - \diverg(\phi^\nu u^\nu) + \gamma \phi^\nu &= \chi\\
\phi^\nu(x,M) &= 0
\end{split}
\end{equation}
with right-hand side $\chi \in C_c^\infty(\R^2\times (0,M))$ for some arbitrary but fixed $M > 0$. The functions $(\phi^\nu)_{\nu > 0}$ converge in $C([0,M];L^q_w(\R^2))$ ($q = \frac{p}{p-1}$) to $\phi$ which solves the corresponding backward transport equation with $u^\nu$ replaced by $u$. As the final value at time $M$ is fixed to $0$, Proposition 1 in \cite{NLSW21} or Lemma 3.3 in \cite{CCS21}, which are used to prove our here formulated \Cref{thm: strong convergence in lp}, can be altered to obtain strong convergence of $(\phi^\nu)_{\nu > 0}$ in $C([0,M];L^q(\R^2))$ to $\phi$. Then, in order to pass to the limit in \eqref{eq: init data}, weak $L^p(\R^2)$ convergence of $(\omega(u_0^\nu))_{\nu > 0}$ actually suffices.

Finally, let us close this section with a result on the vorticity of solutions to the Navier-Stokes equations being instantaneously in $L^2(\R^2)$ with a bound on the corresponding norm for positive times bounded away from $0$, even for initial data of lower integrability. Our estimates are derived as in the proof of Theorem B in \cite{A94} but with the inclusion of a damping term and a right-hand side $g \in (L^1 \cap L^2)(\R^2)$. 

\begin{lem}\label{lem: NSE bdd l2 by l1}
Consider the unique solution $u \in C_{loc}([0,\infty);\CalE_1 \cap \CalE_p)$ of the damped Navier-Stokes equations with initial data $u_0 \in \CalE_1 \cap \CalE_p$, right-hand side $f \in \CalE_1 \cap \CalE_{\max\lbrace 2,p\rbrace}$ and define $g \coloneqq \curl f$. Then, for fixed $t_0 > 0$, $\|\omega(u)(t)\|_{L^2}$ is bounded in $t \geq t_0$ by a constant $C = C(t_0,\gamma,\nu,\|g\|_{L^1 \cap L^2}, \|\omega(u_0)\|_{L^1 \cap L^p})$. 
\end{lem}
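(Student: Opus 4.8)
The plan is to prove an instantaneous $L^1 \to L^2$ smoothing estimate for the vorticity by combining the $L^2$ energy balance of the vorticity equation with Nash's inequality, which turns the viscous dissipation into a \emph{superlinear absorbing term} and produces a Riccati-type differential inequality whose bound does not see the initial $L^2$ norm. The starting point is the energy identity obtained by testing the vorticity formulation with $\omega(u)$: since the transport velocity $u = K*\omega(u)$ is weakly divergence-free, the convective term $\int_{\R^2}(u\cdot\nabla\omega(u))\omega(u)\,dx$ vanishes and one is left with
\[
\tfrac12\tfrac{d}{dt}\|\omega(u)\|_{L^2}^2 + \gamma\|\omega(u)\|_{L^2}^2 + \nu\|\nabla\omega(u)\|_{L^2}^2 = \int_{\R^2} g\,\omega(u)\,dx,
\]
which is precisely \eqref{eq: NSE energy equ} for the exponent $2$. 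Note that the hypothesis $f \in \CalE_1 \cap \CalE_{\max\lbrace 2,p\rbrace}$ guarantees $g \in (L^1\cap L^2)(\R^2)$, which is all that is used on the right-hand side.

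I would then feed in two ingredients. First, \eqref{eq: NSE vort lp bound} with $q = 1$ (admissible since $1 \le p$) gives a bound $\|\omega(u)(t)\|_{L^1} \le A$ uniform in $t \ge 0$, with $A = A(\gamma,\|g\|_{L^1},\|\omega(u_0)\|_{L^1})$. Second, Nash's inequality in $\R^2$, in the form $\|\nabla h\|_{L^2}^2 \ge c_N\,\|h\|_{L^2}^4/\|h\|_{L^1}^2$, yields $\nu\|\nabla\omega(u)\|_{L^2}^2 \ge (\nu c_N/A^2)\,\|\omega(u)\|_{L^2}^4$. Writing $y(t) = \|\omega(u)(t)\|_{L^2}^2$ and absorbing $\int g\,\omega(u)$ by Young's inequality, the energy identity collapses to a Riccati differential inequality $y' \le -a\,y^2 + b$ with $a = 2\nu c_N/A^2$ and $b = \gamma^{-1}\|g\|_{L^2}^2$. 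The decisive feature is that the quadratic absorbing term bounds $y$ independently of $y(0)$: whenever $y \ge \sqrt{2b/a}$ one has $-ay^2 + b \le -\tfrac{a}{2}y^2$, so $\tfrac{d}{dt}(1/y) \ge a/2$, and a short case distinction (either $y$ stays above $\sqrt{2b/a}$ on $(0,t]$, in which case integration gives $y(t)\le 2/(at)$, or it falls to $\sqrt{2b/a}$ and the same monotonicity keeps it there) produces $y(t) \le \tfrac{2}{at} + \sqrt{2b/a}$ for every $t > 0$. In particular $\|\omega(u)(t)\|_{L^2}^2 \le \tfrac{2}{a t_0} + \sqrt{2b/a}$ for all $t \ge t_0$, a constant of the asserted form (depending in fact only on $\|\omega(u_0)\|_{L^1}$, not on $\|\omega(u_0)\|_{L^p}$).

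The main obstacle is rigor for $p < 2$, where $\omega(u_0)$ need not lie in $L^2(\R^2)$, so neither the $L^2$ energy identity above nor even $\omega(u)(t) \in L^2(\R^2)$ is furnished by \Cref{thm: NSE existence and uniqueness}, and the identity cannot be tested at $t = 0$. The Riccati bound is designed exactly for this, since it is insensitive to $y(0)$; to deploy it legitimately I would work with the smooth approximations underlying \Cref{thm: NSE existence and uniqueness} (e.g. smooth, compactly supported $\omega_0^{(n)} \to \omega(u_0)$ in $(L^1\cap L^p)(\R^2)$). For each $n$ the corresponding solution is smooth, so the energy identity and Nash's inequality are justified, $\omega^{(n)}(t) \in (L^1\cap H^1)(\R^2)$ for $t>0$, the $L^1$ bound $A$ is uniform in $n$, and hence the estimate $y^{(n)}(t) \le \tfrac{2}{at} + \sqrt{2b/a}$ holds with constants independent of $n$ and of $\|\omega_0^{(n)}\|_{L^2}$. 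Passing to the limit $n \to \infty$ — using convergence of $\omega^{(n)}$ to $\omega(u)$ in $C_{loc}([0,\infty);(L^1\cap L^p)(\R^2))$ together with weak lower semicontinuity of the $L^2$ norm under the associated weak $L^2$ convergence at each fixed $t \ge t_0$ — transfers the bound to $\omega(u)$ and completes the proof.
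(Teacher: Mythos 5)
Your proposal is correct and follows essentially the same route as the paper: the $p=2$ vorticity energy balance, the uniform-in-time $L^1$ bound from \eqref{eq: NSE vort lp bound}, Nash's inequality to convert the dissipation into a superlinear absorbing term yielding a Riccati differential inequality, and reduction to smooth, compactly supported approximations to justify the computation when $p<2$. The only difference is cosmetic: you close the Riccati step with an elementary comparison argument giving $y(t)\le \tfrac{2}{at}+\sqrt{2b/a}$, whereas the paper invokes the explicit $\coth$ solution of the associated ODE.
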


\begin{proof}
As the unique weak solution in \Cref{lem: NSE bdd l2 by l1} can be constructed by approximation with smooth solutions having smooth, compactly supported initial data, it suffices to show the lemma under these regularity assumptions.\\
Equation \eqref{eq: NSE energy equ} yields
\begin{equation}\label{eq: NSE bdd l2 by l1 (1)}
\frac{d}{dt} \|\omega(u)(t)\|_{L^2}^2 \leq -2\nu \|\nabla\omega(u)(t)\|_{L^2}^2 + \frac{\|g\|_{L^2}^2}{\gamma}
\end{equation}
for $t \geq 0$. The right-hand side in \eqref{eq: NSE vort lp bound} for $q = 1$ can be bounded by a constant $1 \leq c = c(\gamma,\|\omega(u_0)\|_{L^1},\|g\|_{L^1})$ so that the Nash inequality yields
\[\|\omega(u)(t)\|^2_{L^2} \leq \eta \|\omega(u)(t)\|_{L^1}\|\nabla \omega(u)(t)\|_{L^2} \leq c\eta\|\nabla \omega(u)(t)\|_{L^2},\]
for some constant $\eta > 0$. Then \eqref{eq: NSE bdd l2 by l1 (1)} yields
\[\frac{d}{dt}\|\omega(u)(t)\|_{L^2}^2 \leq -\frac{2\nu}{c^2\eta^2}\|\omega(u)(t)\|_{L^2}^4 + \frac{\|g\|_{L^2}^2}{\gamma}.\]
Therefore, to estimate $\|\omega(u)(t)\|_{L^2}^2$, we consider the corresponding Riccati differential equation
\begin{equation}\label{eq: riccati}
\dot{x} = -\frac{2\nu}{c^2\eta^2} x^2 + \frac{\|g\|_{L^2}^2}{\gamma}.
\end{equation}
We remark that generally for $a,b > 0$ and $x_0 > 0$ large enough such that $\sqrt{\frac{b}{a}}x_0 > 1$, the solution to 
\[\dot{x} = -ax^2 +b, x(0) = x_0\]
is given by 
\[x(t) = \sqrt{\frac{b}{a}}\coth(\sqrt{ab}(z+t))\]
for some $z > 0$, where $\coth(s) = \frac{e^{2s}+1}{e^{2s}-1}$ for $s > 0$. We can assume $\omega(u_0),g \neq 0$. As the lemma holds immediately if $\omega(u_0) \in L^2(\R^2)$ due to \eqref{eq: NSE vort lp bound}, we may also assume that we consider smooth, compactly supported approximations of initial vorticity in $L^1(\R^2) \setminus L^2(\R^2)$. Consequently, we suppose $\|\omega(u_0)\|_{L^2}$ to be large. Then, choosing $a$ and $b$ to match \eqref{eq: riccati}, $\sqrt{\frac{b}{a}}\|\omega(u_0)\|_{L^2} > 1$. As $\coth$ is bounded on intervals that are bounded away from $0$, this yields the claim of the lemma.
\end{proof}

\section{Attractors of the damped Navier-Stokes equations}\label{sec: attractors}
The global attractor of the solution semigroup of the damped Navier-Stokes equations on $H$, without assumptions on the vorticity, was constructed in \cite{IPZ15}. We recall the necessary definitions and slightly enhance the attraction and compactness property of the attractor for $L^p$ bounded vorticities. There, we will make an assumption on the right-hand side so that we may argue in a way that also applies to inviscid limit sequences. Except for these inviscid limit considerations, the viscosity $\nu > 0$ will be fixed again throughout this section.\\
All these arguments blend in with the work in \cite{CIZ17,CVZ11,IPZ15,IC17}.

\begin{defi}
Let $\lbrace \theta(t) \rbrace_{t \geq 0}$ be a semigroup on a complete metric space $X$. A set $\CalU \subset X$ is called global attractor of $\lbrace \theta(t) \rbrace_{t \geq 0}$ if 
\begin{enumerate}
\item $\CalU$ is compact in $X$,
\item $\CalU$ is attracting in the sense that for any bounded set $B \subset X$ and every open neighbourhood $\CalO$ of $\CalU$, there exists $t_0 = t_0(B,\CalO) > 0$ such that $S(t)B \subset \CalO$ for all $t \geq t_0$,
\item $\CalU$ is invariant, that is, $S(t)\CalU = \CalU$ for every $t \geq 0$.
\end{enumerate}
\end{defi}

\begin{thm}\cite{IPZ15}\label{thm: ex global attractor}
Let $\lbrace S(t) \rbrace_{t\geq 0}$ be the solution semigroup of the damped Navier-Stokes equations on $H$. Then $\lbrace S(t) \rbrace_{t \geq 0}$ has a global attractor $\CalA$.\\
Moreover, if we let $\CalK \subset C_b(\R;L^2(\R^2))$ be the set of complete solution trajectories of the damped Navier-Stokes equations that are bounded in $L^2(\R^2)$, then
\[\CalA = \lbrace u(0) : u \in \CalK\rbrace.\]
\end{thm}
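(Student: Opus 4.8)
The plan is to obtain the global attractor from the general theory of dissipative semigroups, where it suffices to verify (i) the existence of a bounded absorbing set and (ii) asymptotic compactness of $\lbrace S(t)\rbrace_{t\geq 0}$ on $H$, and then to read off the trajectory characterization $\CalA = \lbrace u(0) : u \in \CalK\rbrace$ from the invariance $S(t)\CalA = \CalA$.

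First I would verify dissipativity, where the damping term is decisive: the velocity estimate in \eqref{eq: NSE vort lp bound} yields $\limsup_{t\to\infty}\|u(t)\|_{L^2}\leq\tfrac{1}{\gamma}\|f\|_{L^2}$ with exponential rate, uniformly over bounded sets of initial data, so that every ball in $H$ of radius larger than $\tfrac{1}{\gamma}\|f\|_{L^2}$ is absorbing. This provides a bounded absorbing set $B_0\subset H$, while \eqref{eq: NSE energy ineq} furnishes uniform control of the enstrophy along orbits that have entered $B_0$.

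The crux is asymptotic compactness, and it is precisely here that the unboundedness of $\R^2$ obstructs the classical argument, since $H^1(\R^2)\hookrightarrow L^2(\R^2)$ is not compact and bounded absorbing sets in a stronger norm therefore do not give precompactness in $H$. Exploiting that $\nu>0$ is fixed, I would favour a uniform tail estimate: testing the velocity equation against a cut-off $\phi_R u$ and using the damping gives $\sup_{u_0\in B_0}\int_{|x|>R}|S(t)u_0|^2\,dx\to 0$ as $R\to\infty$, uniformly for large $t$, whereas on each ball $\lbrace|x|\leq R\rbrace$ the instantaneous $L^2$-bound on the vorticity from \Cref{lem: NSE bdd l2 by l1}, converted through the Biot--Savart law into an $H^1_{loc}$-bound on the velocity, provides local compactness; the two combine to precompactness in $L^2(\R^2)$. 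An equivalent route is the energy-equation (Ball) method, using weak continuity of $S(t)$ on $H$ together with the identity $\tfrac{d}{dt}\|u(t)\|_{L^2}^2 = -2\gamma\|u(t)\|_{L^2}^2 - 2\nu\|\nabla u(t)\|_{L^2}^2 + 2\int_{\R^2}f\cdot u\,dx$ to bound $\limsup_n\|S(t_n)u_n\|_{L^2}\leq\|w\|_{L^2}$ for any weak limit $S(t_n)u_n\rightharpoonup w$, which upgrades weak to strong convergence. Either way one obtains asymptotic compactness, and the global attractor arises as the $\omega$-limit set $\CalA=\bigcap_{s\geq 0}\cl\bigcup_{t\geq s}S(t)B_0$, which is compact, invariant and attracting.

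Finally I would deduce the characterization from invariance. If $u\in\CalK$ has image in a bounded set $B$, then $u(t)=S(s)u(t-s)$ for every $s\geq 0$, so $\dist(u(t),\CalA)=\dist(S(s)u(t-s),\CalA)\to 0$ as $s\to\infty$ by attraction of $B$; as the left-hand side is independent of $s$ and $\CalA$ is closed, $u(t)\in\CalA$, and in particular $u(0)\in\CalA$. Conversely, for $a\in\CalA$ invariance supplies for each $n\in\N$ a point $a_n\in\CalA$ with $S(n)a_n=a$, and setting $u_n(t)=S(n+t)a_n$ for $t\in[-n,0]$ defines trajectories valued in the compact set $\CalA$ with $u_n(0)=a$; using the uniform time-derivative bound \eqref{eq: NSE bdd time-derivative} for equicontinuity in $H^{-L}_{loc}$ together with compactness of $\CalA$ in $L^2$, a diagonal extraction produces a complete trajectory $u\colon(-\infty,0]\to\CalA$ with $u(0)=a$, and extending by $u(t)=S(t)a$ for $t\geq 0$ yields a complete orbit bounded in $L^2$, so $u\in\CalK$. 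I expect the asymptotic compactness step to be the only genuine obstacle: the dissipativity and the trajectory characterization are essentially formal once the semigroup is known to be well-defined, continuous and dissipative, whereas all the difficulty caused by the non-compact domain is concentrated in (ii).
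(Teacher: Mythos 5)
Note first that the paper does not prove this theorem at all: it is quoted from \cite{IPZ15}, so there is no in-paper argument to measure your proposal against and the comparison has to be with the cited literature. Against that benchmark your outline is the right one: an absorbing ball of radius slightly exceeding $\gamma^{-1}\|f\|_{L^2}$ follows from \eqref{eq: NSE vort lp bound}, asymptotic compactness is the only genuine difficulty on $\R^2$, and the identification $\CalA=\lbrace u(0): u\in\CalK\rbrace$ is the standard consequence of attraction and invariance plus a diagonal extraction. For the converse inclusion you should add a line verifying that the diagonal limit of $t\mapsto S(t+n)a_n$ is again a weak solution; this follows from \eqref{eq: NSE bdd time-derivative} together with local strong compactness, exactly as the analogous passage to the limit is carried out in the proof of \Cref{prop: attractor lp vort}.

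The one step that does not survive scrutiny as written is your first route to asymptotic compactness. The theorem concerns the semigroup on $H$, i.e.\ arbitrary divergence-free $L^2$ data with, as the paper stresses, no assumption on the vorticity; \Cref{lem: NSE bdd l2 by l1} requires $u_0\in\CalE_1\cap\CalE_p$ and $f\in\CalE_1\cap\CalE_{\max\lbrace 2,p\rbrace}$ and therefore cannot be invoked to produce the $H^1_{loc}$ bound here. The fix is classical parabolic smoothing for the 2D Navier--Stokes equations with $L^2$ data, which gives $\|\nabla u(t)\|_{L^2}\leq C(t,\nu,\|u_0\|_{L^2},\|f\|_{L^2})$ for $t>0$ without any vorticity hypothesis; in addition, the tail estimate obtained by testing with $\phi_R u$ produces a pressure contribution $\int_{\R^2} p\, u\cdot\nabla\phi_R\,dx$ that needs a word (Calder\'on--Zygmund bounds on $p$). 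Your second route, the energy-equation (Ball) method, avoids both issues and is the cleaner one: the exponential damping weight kills the contribution of the initial data, the viscous term is weakly upper semicontinuous, and weak convergence of $S(t_n)u_n$ upgrades to norm convergence. This is, not coincidentally, exactly the mechanism the paper itself deploys at the vorticity level in the proof of \Cref{prop: attractor lp vort}, so if you write the argument out in full I would commit to that method rather than to the tail estimate.
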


We will continue to denote the solution semigroup of the damped Navier-Stokes equations on $H$ by $\lbrace S(t)\rbrace_{t \geq 0}$ and its attractor by $\CalA$.\\
The set $\CalK$ is bounded in $C_b(\R;H)$ as can be seen as follows. Let $u \in \CalK$ and $R > 0$ s.t. $\sup_{t\in\R}\|u(t)\|_{L^2} \leq R$. Then, for any $t\in\R$ and arbitrary $s > 0$, \eqref{eq: NSE vort lp bound} implies
\begin{equation}
\begin{split}
\|u(t)\|_{L^2} &\leq e^{-\gamma s}\left(\|u(t-s)\|_{L^2} - \frac{\|f\|_{L^2}}{\gamma}\right) + \frac{\|f\|_{L^2}}{\gamma}\\
&\leq e^{-\gamma s}\left(R - \frac{\|f\|_{L^2}}{\gamma}\right) + \frac{\|f\|_{L^2}}{\gamma}\\
&\to \frac{\|f\|_{L^2}}{\gamma}\,(s\to \infty)
\end{split}
\end{equation}
and we conclude
\[\sup_{t\in\R} \|u(t)\|_{L^2} \leq \frac{\|f\|_{L^2}}{\gamma}.\]
In preparation to the following proposition, we define
\[\CalK_p \coloneqq \lbrace u \in \CalK : \sup_{t \in \R} \|\omega(u)(t)\|_{L^q} \leq \frac{\|g\|_{L^q}}{\gamma}, q \in \lbrace 1,p \rbrace \rbrace.\]
Moreover, we make the assumption
\begin{equation}\label{eq: g}
f \in \CalE_1 \cap \CalE_{\max\lbrace 2,p\rbrace} \text{ and define } g \coloneqq \curl f.
\end{equation}

\begin{prop}\label{prop: attractor lp vort}
Let $B \subset \CalE_1 \cap \CalE_p$ be bounded. Under assumption \eqref{eq: g}, the set 
\[\CalA_p \coloneqq \lbrace u(0) : u \in \CalK_p\rbrace\]
is both compact and attracts $B$ w.r.t. $\|\cdot\|_{\CalE_p}$.
\end{prop}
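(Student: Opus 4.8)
The plan is to establish the two assertions---compactness of $\CalA_p$ and its attraction of bounded sets $B \subset \CalE_1 \cap \CalE_p$ with respect to $\|\cdot\|_{\CalE_p}$---largely independently, leaning on the trajectory characterization $\CalA_p = \lbrace u(0) : u \in \CalK_p\rbrace$ and the uniform bounds already collected. The key point enabling everything is that for $u \in \CalK$, the vorticity bounds in \eqref{eq: NSE vort lp bound} propagate backward: running the same Gr\"onwall-type argument used above for the $L^2$ velocity norm, but now applied to $\|\omega(u)(t)\|_{L^q}$ for $q \in \lbrace 1, p\rbrace$, one sees that any complete bounded trajectory actually satisfies $\sup_{t}\|\omega(u)(t)\|_{L^q} \leq \|g\|_{L^q}/\gamma$. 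The subtlety here is that one must first know $\omega(u)(t)$ is bounded in $L^q$ uniformly in $t$ in order to take the limit $s \to \infty$; this is where \Cref{lem: NSE bdd l2 by l1} and the attractor construction in \cite{IPZ15} feed in, giving that complete trajectories have vorticity in $L^1 \cap L^p$ with a priori bounds, so that in fact $\CalK_p = \CalK$ as sets of trajectories with the stated $L^q$ control, and $\CalA_p$ is simply the attractor $\CalA$ viewed with its finer $\CalE_p$ structure.

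For the attraction property, I would first invoke \Cref{thm: ex global attractor} to get that $\CalA$ attracts $B$ in the $H = L^2$ topology, and then upgrade this to $\CalE_p$. The mechanism is that the $L^p$ vorticity norm along any trajectory starting in the bounded set $B$ is, by \eqref{eq: NSE vort lp bound} with $q = p$, eventually trapped inside a ball of radius slightly larger than $\|g\|_{L^p}/\gamma$; combined with the asymptotic compactness of the semigroup (which is exactly what the attractor's existence encodes), I can argue that the $\omega(S(t)B)$ accumulate only on vorticities of trajectories in $\CalK_p$. Concretely, suppose for contradiction that attraction in $\CalE_p$ fails: then there are $t_n \to \infty$, $u_0^n \in B$, and $\eps > 0$ with $\dist_{\CalE_p}(S(t_n)u_0^n, \CalA_p) \geq \eps$. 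Using the backward-in-time trajectory reconstruction, I would extract a complete bounded trajectory through a limit point of $S(t_n)u_0^n$ and show it lies in $\CalK_p$, contradicting the separation. The strong $L^p$ convergence of vorticities is what makes the distance in the $\curl$-component of the $\CalE_p$-norm go to zero rather than merely weakly; here is where the recent convergence results of \Cref{thm: strong convergence in lp} (in the case $\nu^k \to \nu > 0$, i.e.\ Navier-Stokes to Navier-Stokes) do the decisive work, converting the a priori $L^\infty_t L^p_x$ bound into honest $C_{loc}$-in-time, strong-$L^p_x$ compactness.

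For compactness of $\CalA_p$ in $\|\cdot\|_{\CalE_p}$, the strategy is to take a sequence $(u_n(0))_n$ with $u_n \in \CalK_p$ and produce a convergent subsequence in $\CalE_p$. The $L^2$-velocity components converge (after a subsequence) because $\CalA$ is already $L^2$-compact; the real task is compactness of the vorticities $\omega(u_n)(0)$ in $L^p(\R^2)$. For this I would again route through \Cref{thm: strong convergence in lp}: the trajectories $u_n$ are uniformly bounded in $\CalE_1 \cap \CalE_p$ (by the defining bounds of $\CalK_p$), hence after passing to a subsequence their vorticities converge in $C_{loc}(\R; L^p(\R^2))$ to the vorticity of a complete bounded solution, which itself lies in $\CalK_p$; evaluating at $t = 0$ gives strong $L^p$ convergence of $\omega(u_n)(0)$ to an element of $\CalA_p$, establishing both closedness and sequential compactness. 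I expect the main obstacle to be the two-sided-in-time application of the convergence theorem: \Cref{thm: strong convergence in lp} is stated on $[0,\infty)$, whereas complete trajectories live on all of $\R$, so I would need to apply it on $[-T, \infty)$ (or exhaust $\R$ by such half-lines and diagonalize) and verify that the uniform-in-$\nu$, uniform-in-time $L^1 \cap L^p$ bounds guaranteed by \eqref{eq: NSE vort lp bound} and \Cref{lem: NSE bdd l2 by l1} persist under the shift, so that the limiting object is genuinely a complete bounded trajectory with the required $L^q$ norm control placing it in $\CalK_p$.
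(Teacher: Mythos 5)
There is a genuine gap, and it sits exactly where you place the ``decisive work.'' You propose to obtain strong $L^p$ compactness of the vorticities (both for attraction and for compactness of $\CalA_p$) by invoking \Cref{thm: strong convergence in lp} in the Navier--Stokes-to-Navier--Stokes regime. But that theorem \emph{requires strong convergence of the initial vorticities in $L^p(\R^2)$} as a hypothesis. In your setting the data available at the starting time of the shifted trajectories (namely $u_0^n \in B$ at time $-t^n$, or the uniformly bounded $u_n(0)$ with $u_n \in \CalK_p$) are only bounded in $\CalE_1\cap\CalE_p$, hence only weakly convergent in $L^p$ after extraction. Strong $L^p$ convergence at some time slice is precisely the conclusion you are trying to reach, so using \Cref{thm: strong convergence in lp} to produce it is circular; the weak-data variant (\Cref{thm: weak convergence in lp}) only returns weak-$*$ convergence and does not close the loop. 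Likewise, the ``asymptotic compactness encoded by the attractor'' from \Cref{thm: ex global attractor} lives in $H=L^2$ only and cannot by itself upgrade the $\curl$-component to strong $L^p$.

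The missing idea is the enstrophy-balance argument that the paper uses: after securing weak convergence $\omega(u^n)(0)\rightharpoonup\omega(u_0)$ in $L^p$, one proves convergence of the norms $\|\omega(u^n)(0)\|_{L^p}\to\|\omega(u_0)\|_{L^p}$. The upper bound on $\limsup_n\|\omega(u^n)(0)\|_{L^p}^p$ comes from integrating \eqref{eq: NSE energy equ} against the weight $e^{p\gamma t}$ over $[-t^n,0]$ (so the contribution of the initial data decays like $e^{-p\gamma t^n}$ and the viscous term has a sign), passing to the limit in the forcing term and using weak lower semicontinuity for the gradient term; the matching lower bound is weak lower semicontinuity of the $L^p$ norm, and the two meet because the limiting complete trajectory satisfies the exact balance \eqref{eq: compact init data enstrophy balance}. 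Only after this step is \Cref{thm: strong convergence in lp} legitimately applicable (the paper uses it in the case $p>2$, seeded by the strong $L^2$ convergence at time $0$ obtained from the balance argument, and reduces $1<p<2$ to $p=2$ via \Cref{lem: NSE bdd l2 by l1}). A secondary issue: your claim that $\CalK_p=\CalK$ is unjustified --- a complete trajectory bounded only in $L^2$ velocity need not have vorticity in $L^1\cap L^p$ at all, so the backward Gr\"onwall argument has nothing to act on; the paper deliberately defines $\CalK_p$ as the subset of $\CalK$ carrying the vorticity bounds rather than asserting equality.
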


\begin{proof}
We only show the attraction property. The compactness follows similarly.\\
Describing the attraction property in terms of sequences, this would mean that for an arbitrary bounded sequence $(u_0^n)_{n\in\N} \subset \CalE_1 \cap \CalE_p$ and a sequence of times $t^n \to \infty\,(n \to \infty)$, there exists a subsequence of $(S(t^n)u_0^n)_{n\in\N}$ converging to some $u_0 \in \CalA_p$ w.r.t. the norm of $\CalE_p$.\\
First of all, consider solutions of the damped Navier-Stokes equations $(u^n)_{n\in\N} \subset C_{loc}([-t^n,\infty);\CalE_1 \cap \CalE_p)$ satisfying the initial condition $u^n(-t^n) = u_0^n$ for every $n \in \N$. Then $S(t^n)u_0^n = u^n(0)$ for every $n \in \N$ and it suffices to show that $(u^n(0))_{n\in\N}$ converges w.r.t. $\CalE_p$ to an element of $\CalA_p$.\\
Since we already know that $\CalA$ is the attractor of $\lbrace S(t) \rbrace_{t \geq 0}$, we may assume that there exists $u \in \CalK$ such that for $u_0 \coloneqq u(0) \in \CalA$ and a subsequence, which we do not relabel, 
\[u^n(0) \to u_0\,(n\to\infty) \text{ in } L^2(\R^2).\]
As $(\omega(u_0^n))_{n\in\N}$ is bounded in $(L^1 \cap L^p)(\R^2)$, we may also assume due to the bound \eqref{eq: NSE vort lp bound},
\begin{equation}\label{eq: weak lp conv attractor}
\omega(u^n)(0) \rightharpoonup \omega(u_0)\,(n\to\infty) \text{ in } L^p(\R^2).
\end{equation} 
It now remains to show that $u \in \CalK_p$ and that we have strong convergence in \eqref{eq: weak lp conv attractor} for which it suffices to show convergence of the norms $\lim_{n \to \infty} \|\omega(u^n)(0)\|_{L^p} = \|\omega(u_0)\|_{L^p}$.\\
For this, we first note that we may naturally extend $u^n$ constantly by $u^n_0$ to $(-\infty,-t^n]$ for every $n \in \N$ so that $(u^n)_{n\in\N}$ is bounded in $C_b(\R;\CalE_1 \cap \CalE_p)$. In particular, we may assume
\begin{equation}\label{eq: weak conv vorticity}
\omega(u^n) \overset{\ast}{\rightharpoonup} \omega(u)\,(n \to \infty) \text{ in } L^\infty(\R;L^p(\R^2))
\end{equation}
and, due to \eqref{eq: NSE bdd time-derivative} and \eqref{eq: NSE energy ineq} along with the Sobolev inequality, $u^n \to u\,(n\to\infty)$ in $L^2_{loc}(\R;L^\frac{p}{p-1}_{loc}(\R^2))$. This yields convergence of the product terms $(u^n \omega(u^n))_{n\in\N}$ in the sense of distributions to $u\omega(u)$ so that $\omega(u)$ can be seen to also satisfy the vorticity formulation of the damped Navier-Stokes equations with initial data $\omega(u_0)$ on any compact time interval.\\
The bound on the vorticity in the definition of $\CalK_p$ can then be derived for $\omega(u)$ from \eqref{eq: NSE vort lp bound} so that $u \in \CalK_p$. Now, for every $n \in \N$,
\begin{equation}\label{eq: equ for initial data}
\begin{split}
&\|\omega(u^n)(0)\|_{L^p}^p- \|\omega(u^n)(-t^n)\|^p_{L^p}e^{-p\gamma t^n}\\
\leq&- 4\nu\frac{p-1}{p}\int_{-t^n}^0 e^{p\gamma t}\int_{\R^2}|\nabla|\omega(u^n)|^\frac{p}{2}|^2\,dx\,dt + p\int_{-t^n}^0 e^{p\gamma t}\int_{\R^2} g\omega(u^n)|\omega(u^n)|^{p-2}\,dx\,dt.
\end{split}
\end{equation}
As $\|\omega(u^n)(-t^n)\|_{L^p} = \|\omega(u^n_0)\|_{L^p}$ is bounded in $n \in \N$, it follows that $\|\omega(u^n)(-t^n)\|_{L^p}^p e^{-p\gamma t^n} \to 0\,(n \to \infty)$.\\
To treat the two terms on the right-hand side, we first differ between the cases $p = 2$ and $p > 2$. Under assumption \eqref{eq: g}, it is easy then to trace the case $1 < p < 2$ back to the case $p = 2$.\\
$p = 2$: From weak-* convergence \eqref{eq: weak conv vorticity}, that is, $\omega(u^n) \overset{\ast}{\rightharpoonup} \omega(u)\,(n\to\infty)$ in $L^\infty(\R;L^2(\R^2))$,
%and from the uniform boundedness of $(\omega(u^n))_{n\in\N}$ in $L^\infty(\R;L^2(\R^2))$, 
we obtain
\[2\int_{-t^n}^0 e^{2\gamma t}\int_{\R^2}g \omega(u^n) \,dx\,dt \to 2\int_{-\infty}^0e^{2\gamma t}\int_{\R^2} g\omega(u)\,dx\,dt\,(n\to\infty).\]
Due to \eqref{eq: NSE energy ineq}, we may also assume $\nabla \omega(u^n) \rightharpoonup \nabla \omega(u)\,(n\to\infty)$ in $L^2_{loc}(\R;L^2(\R^2))$. Using lower semi-continuity of the norm, we obtain for every $M>0$,
\begin{equation}
\begin{split}
&\liminf_{n \to \infty} \int_{-t^n}^0 e^{2\gamma t}\int_{\R^2}|\nabla|\omega(u^n)||^2\,dx\,dt = \liminf_{n \to \infty} \int_{-t^n}^0 e^{2\gamma t}\int_{\R^2}|\nabla\omega(u^n)|^2\,dx\,dt\\
\geq& \liminf_{n \to \infty}\int_{-M}^0 e^{2\gamma t}\int_{\R^2}|\nabla\omega(u^n)|^2\,dx\,dt
\geq \int_{-M}^0 e^{2\gamma t}\int_{\R^2}|\nabla\omega(u)|^2\,dx\,dt
= \int_{-M}^0 e^{2\gamma t}\int_{\R^2}|\nabla|\omega(u)||^2\,dx\,dt
\end{split}
\end{equation}
%%% Recall (Evans & Gariepy Section 4.2.2 Theorem 4) for f \in W^{1,2}(U), D|f| = Df\chi_{f > 0} - Df\chi_{f < 0} and Df = 0 on {f = 0}
so that
\begin{equation}
\liminf_{n \to \infty} \int_{-t^n}^0 e^{2\gamma t}\int_{\R^2}|\nabla|\omega(u^n)||^2\,dx\,dt \geq \int_{-\infty}^0 e^{2\gamma t}\int_{\R^2}|\nabla|\omega(u)||^2\,dx\,dt.
\end{equation}
Combining these results yields
\[\limsup_{n \to \infty} \|\omega(u^n)(0)\|_{L^2} \leq -2\nu \int_{-\infty}^0 e^{2\gamma t}\int_{\R^2}|\nabla|\omega(u)||^2\,dx\,dt + 2\int_{-\infty}^0 e^{2\gamma t}\int_{\R^2}g \omega(u)\,dx\,dt.\]
On the other hand, the vorticity $\omega(u)$ of the complete trajectory $u$ satisfies $\eqref{eq: NSE energy equ}$ almost everywhere on $\R$ from which we obtain
\begin{equation}\label{eq: compact init data enstrophy balance}
\|\omega(u_0)\|_{L^2} = -2\nu \int_{-\infty}^0 e^{2\gamma t}\int_{\R^2}|\nabla|\omega(u)||^2\,dx\,dt + 2\int_{-\infty}^0 e^{2\gamma t}\int_{\R^2}g \omega(u)\,dx\,dt.
\end{equation}
Note that even though the right-hand side in \eqref{eq: NSE energy ineq} depends on the length of the time interval, the exponential function in the integrals above guarantees their convergence on $(-\infty,0]$.\\
We finally arrive at
\[\limsup_{n\to\infty} \|\omega(u^n)(0)\|_{L^2} \leq \|\omega(u_0)\|_{L^2} \leq \liminf_{n\to\infty}\|\omega(u^n)(0)\|_{L^2}.\]
$p > 2$: We note that as we have boundedness of the initial vorticities $(\omega(u^n_0))_{n\in\N}$ in $L^1(\R^2)$, by interpolation, we are also in the situation of the previous case $p = 2$, which means that next to weak-* convergence \eqref{eq: weak conv vorticity}, that is, $\omega(u^n) \overset{\ast}{\rightharpoonup} \omega(u)\,(n\to\infty)$ in $L^\infty(\R;L^p(\R^2))$, we may also assume $\omega(u^n)(0) \to \omega(u)(0)\,(n \to \infty)$ in $L^2(\R^2)$. By \Cref{thm: strong convergence in lp}, this yields strong convergence
\begin{equation}\label{eq: strong convergence on pos times}
\omega(u^n) \to \omega(u)\,(n \to \infty) \text{ in } C_{loc}([0,\infty);L^2(\R^2)).
\end{equation}
Note that we can use any other initial time $t_0 < 0$ in the above argument to obtain strong convergence on any compact subset of $\R$, not just of $[0,\infty)$ and \eqref{eq: strong convergence on pos times} holds in $C_{loc}(\R;L^2(\R^2))$.\\
The second term on the right-hand side in \eqref{eq: equ for initial data} involving the gradients may again be estimated in the limit by employing an argument of weak lower-semicontinuity.\\
For the other term, note that due to boundedness of $(\omega(u^n))_{n\in\N}$ in $C_b(\R;(L^1 \cap L^p)(\R^2))$ and $1 < \frac{p}{p-1} \leq p$, we may assume that $(\omega(u^n)|\omega(u^n)|^{p-2})_{n\in\N}$ converges weakly-* in $L^\infty(\R;L^{\frac{p}{p-1}}(\R^2))$ to some $\Psi$. We can then conclude the argument as in the previous case if we can prove that 
\begin{equation}\label{eq: weak convergence product}
\Psi = \omega(u)|\omega(u)|^{p-2}.
\end{equation}
Due to strong convergence in $C_{loc}(\R;L^2(\R^2))$, interpolating between $L^1(\R^2)$ and $L^2(\R^2)$ or $L^2(\R^2)$ and $L^p(\R^2)$, depending on whether $p-1 < 2$ or $p-1 \geq 2$, yields
\[|\omega(u^n)|^{p-2} \to |\omega(u)|^{p-2}\,(n \to \infty)\text{ in } L^\infty_{loc}(\R;L^{\frac{p-1}{p-2}}(\R^2)).\]
As we may also assume $\omega(u^n) \overset{\ast}{\rightharpoonup} \omega(u)\,(n \to \infty)$ in $L^\infty(\R;L^{p-1}(\R^2))$ and $\frac{1}{p-1} + \frac{p-2}{p-1} = 1$, we can conclude \eqref{eq: weak convergence product}.

$1 < p < 2$: In this case, we argue that we are actually in the situation of the case $p=2$, i.e., we have strong convergence $\|\omega(u^n)(0)\|_{L^2} \to \|\omega(u)(0)\|_{L^2}\,(n\to\infty)$, which along with a uniform $L^1$ bound implies $\|\omega(u^n)(0)\|_{L^p} \to \|\omega(u)(0)\|_{L^p}\,(n\to\infty)$ as desired.\\
As seen in \Cref{lem: NSE bdd l2 by l1}, $\omega(u^n)(t)$ is instantaneously bounded in $L^2(\R^2)$ for $t$ bounded away from the initial time $-t^n$. Therefore, assuming $t^n > 1$ for all $n \in \N$ and simply using the sequence $\tilde{t}^n \coloneqq t^n -1$, $n \in \N$, instead of $(t^n)_{n\in\N}$ in \eqref{eq: equ for initial data} and the arguments to follow, we obtain as in the case $p = 2$ that $\|\omega(u^n)(0)\|_{L^2} \to \|\omega(u)(0)\|_{L^2}\,(n\to\infty)$ as desired.
\end{proof}

We note that the arguments in the proof of \Cref{prop: attractor lp vort} also apply to inviscid limit sequences for $p \geq 2$ as the term explicitly involving the viscosity and gradient in \eqref{eq: equ for initial data} can simply be dropped when considering the limit superior and since we did not rely on bounds depending on the viscosity $\nu$ for the other terms. The essential property is that it is a priori known that the limit satisfies \eqref{eq: EE energy equ} which in the inviscid limit case would be guaranteed by \Cref{thm: weak convergence in lp}. We state this observation in the following proposition.\\
We also point out that the employed estimates that are independent of $\nu > 0$ and subsequently hold for inviscid limit sequences no longer suffice if we just assume $f \in \CalE_1 \cap \CalE_p$ for $1 < p < 2$, which is the primary reason why we make the assumption $f \in \CalE_1 \cap \CalE_{\max\lbrace 2,p\rbrace}$. If we were only to consider the damped Navier-Stokes equations, this would not be necessary.\\
Finally, we mention that the arguments in the proof of \Cref{prop: attractor lp vort} actually showed how to improve weak convergence (of the initial data) to strong convergence. An adaptation of these arguments can actually be used to prove \Cref{thm: strong convergence in lp} for $p \geq 2$, as was pointed out in the introduction of \cite{NLSW21} with reference to Remark 2 in \cite{CDE22}.

\begin{prop}\label{prop: weak to strong conv}
We assume that $p \geq 2$. Let $(\nu^k)_{k\in\N} \subset (0,1)$ be a sequence converging to $0$ and let $(f^{\nu^k})_{k\in\N}$ be a bounded sequence in $\CalE_1 \cap \CalE_p$. Suppose that there exists $f \in \CalE_1 \cap \CalE_p$ s.t. for $g^{\nu^k} \coloneqq \curl f^{\nu^k}$ and $g \coloneqq \curl f$
\[g^{\nu^k} \to g \,(k \to \infty) \text{ in } L^p(\R^2).\]
Consider a sequence $(u^{\nu^k})_{k\in\N}$ of complete weak solutions of the damped Navier-Stokes equations with right-hand sides $(f^{\nu^k})_{k\in\N}$, bounded in $\CalE_1 \cap \CalE_p$ with $\nu^k$ being the viscosity parameter associated to $u^{\nu^k}$ for every $k \in \N$. After passing to a subsequence, there exists $u \in L^\infty([0,\infty);\CalE_1 \cap \CalE_p)$ whose vorticity $\omega(u)$ satisfies the damped Euler equations with right-hand side $g$ in the renormalized sense and
\[\omega(u^{\nu^k}) \to \omega(u)\,(k\to\infty)\text{ in } C_{loc}([0,\infty);L^p(\R^2)).\]
\end{prop}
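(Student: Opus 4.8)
The plan is to argue exactly as in the proof of \Cref{prop: attractor lp vort}, exploiting that the trajectories are complete and uniformly bounded so that the relevant energy balances can be integrated over $(-\infty,0]$ with a vanishing boundary contribution at $-\infty$. First I would extract limits: by the uniform bound of $(u^{\nu^k})_{k\in\N}$ in $C_b(\R;\CalE_1\cap\CalE_p)$ together with the $\nu$-independent bounds \eqref{eq: NSE vort lp bound}, after passing to a subsequence $\omega(u^{\nu^k}) \overset{\ast}{\rightharpoonup} \omega(u)$ in $L^\infty(\R;L^p(\R^2))$ for some $u \in L^\infty([0,\infty);\CalE_1\cap\CalE_p)$, and $\omega(u^{\nu^k})(0) \rightharpoonup \omega(u)(0)$ in $L^p(\R^2)$. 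Applying \Cref{thm: weak convergence in lp} with initial data at time $0$ identifies $\omega(u)$ as a renormalized solution of the damped Euler equations with right-hand side $g$. The control \eqref{eq: NSE bdd time-derivative} on the time derivative moreover yields pointwise-in-time weak convergence $\omega(u^{\nu^k})(t) \rightharpoonup \omega(u)(t)$ in $L^p(\R^2)$ for every $t$, via a weak Arzel\`a--Ascoli argument.

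The whole statement reduces to establishing strong convergence $\omega(u^{\nu^k})(0) \to \omega(u)(0)$ in $L^p(\R^2)$: once this is known, \Cref{thm: strong convergence in lp} applied with this strongly convergent initial datum upgrades the convergence to $C_{loc}([0,\infty);L^p(\R^2))$. Since $L^p(\R^2)$ is uniformly convex for $1<p<\infty$, together with the weak convergence it suffices to prove $\|\omega(u^{\nu^k})(0)\|_{L^p} \to \|\omega(u)(0)\|_{L^p}$. For this I would integrate the energy identity \eqref{eq: NSE energy equ} (an equality since $p\geq 2$) over $(-\infty,0]$ against the weight $e^{\gamma p s}$; uniform boundedness of $\|\omega(u^{\nu^k})(s)\|_{L^p}$ and $e^{\gamma p s}\to 0$ as $s\to-\infty$ make the boundary term vanish, giving
\begin{equation*}
\|\omega(u^{\nu^k})(0)\|_{L^p}^p = \int_{-\infty}^0 e^{\gamma p s}\Bigl(-4\nu^k\tfrac{p-1}{p}\int_{\R^2}|\nabla|\omega(u^{\nu^k})|^{\frac{p}{2}}|^2\,dx + p\int_{\R^2}g^{\nu^k}\omega(u^{\nu^k})|\omega(u^{\nu^k})|^{p-2}\,dx\Bigr)\,ds,
\end{equation*}
together with the corresponding identity \eqref{eq: EE energy equ} for the limit in which the viscous term is absent.

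To pass to the limit in the forcing term I first treat the exponent $2$, where the integrand is linear in the vorticity: dropping the non-positive viscous term and using $g^{\nu^k}\to g$ in $L^2(\R^2)$ (interpolating the $L^p$ convergence against the uniform $L^1$ bound) paired with the weak-$\ast$ convergence yields $\limsup_k \|\omega(u^{\nu^k})(s)\|_{L^2}^2 \leq \|\omega(u)(s)\|_{L^2}^2$ at every $s\leq 0$, using that the limit satisfies the corresponding $L^2$ energy balance; with weak lower semicontinuity this gives $\|\omega(u^{\nu^k})(s)\|_{L^2}\to\|\omega(u)(s)\|_{L^2}$ and thus strong convergence in $L^2(\R^2)$ at every such $s$. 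For $p>2$ this pointwise strong $L^2$ convergence, combined with the uniform $L^1\cap L^p$ bounds, identifies the weak-$\ast$ limit of $\omega(u^{\nu^k})|\omega(u^{\nu^k})|^{p-2}$ as $\omega(u)|\omega(u)|^{p-2}$ exactly as in the case $p>2$ of \Cref{prop: attractor lp vort}, interpolating $|\omega(u^{\nu^k})|^{p-2}$ in $L^\infty_{loc}(L^{(p-1)/(p-2)})$ and pairing by H\"older duality; the uniformly bounded spatial integrals then converge for a.e.\ $s$ and dominated convergence against the integrable weight $e^{\gamma p s}$ passes the time integral to the limit. Dropping the viscous term for the upper bound and using weak lower semicontinuity for the lower bound finally gives $\|\omega(u^{\nu^k})(0)\|_{L^p}\to\|\omega(u)(0)\|_{L^p}$, completing the reduction.

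The main obstacle is the two-tier structure forced by the nonlinearity of the forcing term for $p>2$: one cannot identify the weak-$\ast$ limit of $\omega(u^{\nu^k})|\omega(u^{\nu^k})|^{p-2}$ without first upgrading to strong $L^2$ convergence, and it is precisely the completeness and uniform boundedness of the trajectories --- permitting the integration over $(-\infty,0]$ that removes any dependence on the data and makes the $L^2$ balance close by itself --- that breaks this apparent circularity.
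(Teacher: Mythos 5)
Your proposal is correct and follows essentially the same route as the paper, which proves this proposition simply by observing that the argument of \Cref{prop: attractor lp vort} carries over to inviscid-limit sequences once the (sign-definite) viscous term is dropped and \Cref{thm: weak convergence in lp} supplies the energy balance \eqref{eq: EE energy equ} for the limit. Your write-up fleshes out exactly those steps (weighted energy identity over $(-\infty,0]$, the $L^2$-first bootstrap for $p>2$, identification of the weak-$\ast$ limit of $\omega|\omega|^{p-2}$ by interpolation, and the final upgrade via \Cref{thm: strong convergence in lp}), so no substantive difference remains.
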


\Cref{prop: weak to strong conv} is going to be useful in combination with the following lemma.

\begin{lem}\label{lem: l2 bdd attractor}
Let $1 < p < \infty$ but assume \eqref{eq: g} for $f$. Then, $\CalA_p \subset \CalA_2$ and $\CalK_p \subset \CalK_2$ so that in particular every $u \in \CalK_p$ satisfies
\begin{equation}\label{eq: bd ancient trajectories}
\sup_{t \in \R} \|\omega(u)(t)\|_{L^2(\R^2)} \leq \frac{\|g\|_{L^2}}{\gamma}.
\end{equation}
\end{lem}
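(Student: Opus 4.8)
\section*{Proof proposal}

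The inclusion $\CalA_p \subset \CalA_2$ follows immediately once we establish $\CalK_p \subset \CalK_2$, and since the $L^1$-bound required in the definition of $\CalK_2$ is already built into the definition of $\CalK_p$, the entire lemma reduces to proving the $L^2$-estimate \eqref{eq: bd ancient trajectories} for an arbitrary $u \in \CalK_p$. My plan is to first show that $\omega(u)(t)$ lies in $L^2(\R^2)$ with a bound that is uniform in $t \in \R$, and then to sharpen this uniform bound to the constant $\|g\|_{L^2}/\gamma$ via the same look-back argument that was used in the text to bound the $L^2$ velocity norm on $\CalK$.

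For the first step I would distinguish the cases $p \geq 2$ and $1 < p < 2$. When $p \geq 2$, membership $\omega(u)(t) \in L^2(\R^2)$ together with a uniform bound comes directly from interpolating between the $L^1$ and $L^p$ norms, both of which are uniformly controlled on $\CalK_p$ by $\|g\|_{L^1}/\gamma$ and $\|g\|_{L^p}/\gamma$. When $1 < p < 2$, no such interpolation is available, and here I would invoke the regularizing effect of the Navier-Stokes flow recorded in \Cref{lem: NSE bdd l2 by l1}: assumption \eqref{eq: g} guarantees $g \in (L^1 \cap L^2)(\R^2)$, and for any $t \in \R$ one may restart the complete trajectory from the earlier time $t-1$ with data $u(t-1) \in \CalE_1 \cap \CalE_p$, so that \Cref{lem: NSE bdd l2 by l1} with $t_0 = 1$ yields $\omega(u)(t) \in L^2(\R^2)$ with a bound $C(1,\gamma,\nu,\|g\|_{L^1 \cap L^2},\|\omega(u)(t-1)\|_{L^1 \cap L^p})$. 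The crucial point is that $\|\omega(u)(t-1)\|_{L^1 \cap L^p}$ is bounded on $\CalK_p$ uniformly in $t$, so this constant can be chosen independently of $t$, giving the desired uniform bound $\sup_{t \in \R}\|\omega(u)(t)\|_{L^2} \leq C$.

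For the second step, once the uniform $L^2$-bound is in hand, the energy bound \eqref{eq: NSE vort lp bound} applies with $q = 2$: for $p \geq 2$ this is immediate since $2 \leq p$, while for $p < 2$ it is now justified because the vorticity is genuinely in $L^2(\R^2)$, so the $L^2$ energy estimate may be run from any past time. Fixing $t \in \R$ and applying this bound on $[t-s,t]$ gives
\[
\|\omega(u)(t)\|_{L^2} \leq e^{-\gamma s}\left(\|\omega(u)(t-s)\|_{L^2} - \frac{\|g\|_{L^2}}{\gamma}\right) + \frac{\|g\|_{L^2}}{\gamma};
\]
letting $s \to \infty$ and using the uniform bound on $\|\omega(u)(t-s)\|_{L^2}$ from the first step annihilates the exponential term and yields \eqref{eq: bd ancient trajectories}.

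The main obstacle is precisely the case $1 < p < 2$, in which the vorticity of the trajectory is a priori only in $L^1 \cap L^p$ and cannot be upgraded to $L^2$ by interpolation; the whole role of assumption \eqref{eq: g} and of \Cref{lem: NSE bdd l2 by l1} is to supply this missing $L^2$ regularity. The one subtlety to verify with care is that the regularizing constant is uniform in the base point $t$, which holds exactly because membership in $\CalK_p$ enforces a uniform $L^1 \cap L^p$ vorticity bound along the whole trajectory.
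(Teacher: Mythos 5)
Your proof is correct, but it runs in the opposite direction from the paper's. You establish $\CalK_p \subset \CalK_2$ directly at the level of individual trajectories — uniform $L^2$ membership of the vorticity via interpolation (for $p \geq 2$) or via the instantaneous regularization of \Cref{lem: NSE bdd l2 by l1} restarted from $t-1$ (for $1 < p < 2$), followed by the look-back decay estimate \eqref{eq: NSE vort lp bound} with $q=2$ on $[t-s,t]$ with $s \to \infty$ — and then read off $\CalA_p \subset \CalA_2$ from the definitions. The paper instead argues dynamically: it applies \Cref{lem: NSE bdd l2 by l1} to a bounded set $B \subset \CalE_1 \cap \CalE_p$ to place $S(t)B$ into the $p=2$ setting, invokes the attraction property of $\CalA_2$ from \Cref{prop: attractor lp vort}, and combines attraction of $\CalA_2$ with invariance of $\CalA_p$ (taking $B = \CalA_p$) to get $\CalA_p \subset \CalA_2$ first, deducing $\CalK_p \subset \CalK_2$ afterwards. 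Your route is more elementary and self-contained in that it does not rely on the compactness/attraction machinery of \Cref{prop: attractor lp vort} for the exponent $2$, only on \Cref{lem: NSE bdd l2 by l1} and the a priori decay estimate; the paper's route is shorter given that the attractor theory has already been developed. The two points worth stating explicitly in your write-up are the ones you already flag: that the regularization constant is uniform in the base point $t$ because the $L^1 \cap L^p$ vorticity bound is uniform along $\CalK_p$, and that once $\omega(u)(t-s) \in L^1 \cap L^2$ and $f \in \CalE_1 \cap \CalE_2$ (assumption \eqref{eq: g}), the restarted problem falls under \Cref{thm: NSE existence and uniqueness} with exponent $2$, so \eqref{eq: NSE vort lp bound} with $q=2$ is legitimately available even when $p < 2$.
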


\begin{proof}
Let $B$ be a bounded subset of $\CalE_1 \cap \CalE_p$. Due to \Cref{lem: NSE bdd l2 by l1}, $S(t)B$ is bounded in $L^2(\R^2)$ for positive $t$ bounded away from $0$. Due to assumption \eqref{eq: g}, we are essentially in the situation $p = 2$ and $\CalA_2$ attracts $B$ for $\lbrace S(t) \rbrace_{t \geq 0}$ w.r.t. $\|\cdot\|_{\CalE_2}$. If we particularly consider the set $B = \CalA_p$, attraction of $\CalA_2$ and invariance of $\CalA_p$ w.r.t. $\lbrace S(t) \rbrace_{t \geq 0}$ already imply
\[\CalA_p \subset \CalA_2.\]
Using the definition of $\CalA_p, \CalA_2$ and estimate \eqref{eq: NSE vort lp bound}, one also obtains $\CalK_p \subset \CalK_2$ from this.
\end{proof}

\section{Long time average invariant measures}\label{sec: time-average invar meas}

In this section, we are concerned with the construction of invariant measures from distributions of initial data via long time averages.\\
The main theorem of this section, \Cref{thm: krylov-bogolioubov}, is essentially an application of Theorem 7 in \cite{LRR11}. We remark, however, that the stated assumptions are note quite satisfied here. The major discrepancy comes again from working on $\R^2$, where $\CalE_1 \not\subset \CalE_p$: We showed in \Cref{prop: attractor lp vort} that $\CalA_p$ is compact and attracting w.r.t. $\|\cdot\|_{\CalE_p}$ instead of $\|\cdot\|_{\CalE_1 \cap \CalE_p}$, while we would like to regard the solution semigroup of the damped Navier-Stokes equations $\lbrace S(t) \rbrace_{t \geq 0}$ as a semigroup on $\CalE_1 \cap \CalE_p$.\\
Nevertheless, the proof of Theorem 7 in \cite{LRR11} still applies and we give it here for the convenience of the reader.

\begin{thm}\label{thm: krylov-bogolioubov}
Let $\mu_0$ be a Borel probability measure on $\CalE_1 \cap \CalE_p$. Then, for any sequence $t^j \to \infty\,(j \to \infty)$, there exists a subsequence $t^{j^k} \to \infty\,(k \to \infty)$ and a Borel probability measure $\mu$ on $\CalE_p$, concentrated on $\CalA_p$, which is invariant w.r.t. $\lbrace S(t) \rbrace_{t \geq 0}$ (when restricted to $\CalE_1 \cap \CalE_p$) s.t. for any $\varphi \in C(\CalE_p)$,
\begin{equation}\label{eq: krylov-bogolioubov}
\lim_{k \to \infty} \frac{1}{t^{j^k}} \int_0^{t^{j^k}}\int_{\CalE_1 \cap \CalE_p} \varphi(S(t)u_0)\,d\mu_0(u_0)\,dt = \int_{\CalA_p} \varphi(u)\,d\mu(u).
\end{equation} 
\end{thm}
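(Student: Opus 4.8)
The plan is to run the classical Krylov--Bogolioubov scheme via Prokhorov's theorem, while carefully tracking the discrepancy flagged before the statement: the attractor $\CalA_p$ is compact and attracting only for $\|\cdot\|_{\CalE_p}$, whereas $\lbrace S(t)\rbrace$ naturally acts on $\CalE_1\cap\CalE_p$. For $T>0$ I would first introduce the time-averaged push-forward measures $\mu_T$ on $\CalE_p$ by
\[
\int_{\CalE_p}\varphi\,d\mu_T \defeq \frac{1}{T}\int_0^T\int_{\CalE_1\cap\CalE_p}\varphi(S(t)u_0)\,d\mu_0(u_0)\,dt,\qquad \varphi\in C_b(\CalE_p),
\]
which are well-defined Borel probability measures since $t\mapsto S(t)u_0$ is continuous (\Cref{thm: NSE existence and uniqueness}) and $\mu_0$ is concentrated on $\CalE_1\cap\CalE_p$. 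The whole statement then follows once we prove that $(\mu_{t^j})_j$ is tight, extract a weak limit $\mu$ along a subsequence, and identify $\mu$ as an invariant measure concentrated on $\CalA_p$.

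First I would establish tightness and concentration simultaneously. Since $\CalA_p$ is compact in $\CalE_p$ by \Cref{prop: attractor lp vort}, for every $\eps>0$ the open neighbourhood $\CalO_\eps(\CalA_p)$ is totally bounded, so its closure $K_\eps$ is compact in the complete space $\CalE_p$. To handle the possibly unbounded support of $\mu_0$, I would split it at a large ball: given $\delta>0$, choose $R$ with $\mu_0(\lbrace\|u_0\|_{\CalE_1\cap\CalE_p}>R\rbrace)<\delta$. The ball $B_R$ is bounded, so the attraction property of \Cref{prop: attractor lp vort} provides $t_0=t_0(R,\eps)$ with $S(t)B_R\subset\CalO_\eps(\CalA_p)$ for $t\ge t_0$. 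Bounding the indicator of $\CalO_\eps(\CalA_p)^c$ by $1$ on $[0,t_0]$ and by $0$ afterwards on $B_R$, and by $\mu_0(B_R^c)<\delta$ on the complement, yields
\[
\mu_{t^j}\bigl(\CalO_\eps(\CalA_p)^c\bigr)\le \delta+\frac{t_0}{t^j}.
\]
Letting $j\to\infty$ and then $\delta\to 0$ (i.e.\ $R\to\infty$) gives $\lim_{j}\mu_{t^j}(\CalO_\eps(\CalA_p)^c)=0$ for every $\eps$; in particular the family is tight. By Prokhorov's theorem there is a subsequence $t^{j^k}$ and a Borel probability measure $\mu$ on $\CalE_p$ with $\mu_{t^{j^k}}\rightharpoonup\mu$. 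The portmanteau theorem applied to the open set $\CalO_\eps(\CalA_p)^c$ then gives $\mu(\CalO_\eps(\CalA_p)^c)\le\liminf_k\mu_{t^{j^k}}(\CalO_\eps(\CalA_p)^c)=0$, and letting $\eps\to 0$ yields $\mu(\CalA_p)=1$.

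Next I would verify invariance. For $\varphi\in C_b(\CalE_p)$ and $s>0$, the semigroup property $S(s)S(t)=S(t+s)$ together with a change of variables gives
\[
\int\varphi(S(s)u)\,d\mu_{t^j}(u)-\int\varphi\,d\mu_{t^j}=\frac{1}{t^j}\left(\int_{t^j}^{t^j+s}-\int_0^s\right)\int\varphi(S(t)u_0)\,d\mu_0(u_0)\,dt,
\]
whose modulus is at most $2s\|\varphi\|_{\infty}/t^j\to 0$. Passing to the limit along the subsequence requires $\varphi\circ S(s)$ to be bounded and continuous at $\mu$-a.e.\ point; since $\mu$ is carried by $\CalA_p$, on which $S(s)$ is continuous in $\CalE_p$ (continuous dependence on the bounded invariant set $\CalA_p$, as in the arguments of \Cref{prop: attractor lp vort} with $\nu$ fixed), this is legitimate and yields $\int\varphi(S(s)u)\,d\mu=\int\varphi\,d\mu$. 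Hence $S(s)_{\ast}\mu=\mu$ for every $s\ge 0$.

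Finally, for $\varphi\in C_b(\CalE_p)$ the definition of $\mu_{t^j}$ identifies the left-hand side of \eqref{eq: krylov-bogolioubov} with $\int\varphi\,d\mu_{t^{j^k}}$, which converges to $\int\varphi\,d\mu=\int_{\CalA_p}\varphi\,d\mu$ by weak convergence. For the continuous but unbounded quantities used later (such as the $p$-enstrophy $u\mapsto\|\omega(u)\|_{L^p}^p$), I would extend the identity by truncation, using the uniform concentration $\lim_j\mu_{t^j}(\CalO_\eps(\CalA_p)^c)=0$ above, the orbit bound $\|S(t)u_0\|_{\CalE_p}\le\|u_0\|_{\CalE_p}+C$ coming from \eqref{eq: NSE vort lp bound}, and the finite $p$-th moment hypothesis \eqref{eq: main thm finite p moment} to secure uniform integrability. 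I expect the main obstacle to be exactly this norm mismatch: because $\CalA_p$ is compact and attracting only for $\|\cdot\|_{\CalE_p}$ while $\lbrace S(t)\rbrace$ acts on $\CalE_1\cap\CalE_p$, one is forced to build $\mu$ on $\CalE_p$, to cut $\mu_0$ off at balls of $\CalE_1\cap\CalE_p$ in order to exploit attraction of bounded sets, and to rely on $\mu$ being supported on $\CalA_p\subset\CalE_1\cap\CalE_p$ when invoking continuity of $S(s)$ in the invariance step.
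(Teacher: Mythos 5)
Your overall scheme --- time-averaged push-forward measures, a concentration estimate near $\CalA_p$ obtained by truncating $\mu_0$ and using the attraction property, Prokhorov's theorem, then invariance via the shift computation --- is the same as the paper's, and your key estimate $\mu_{t^j}(\CalO_\eps(\CalA_p)^c)\le\delta+t_0/t^j$ is exactly the bound the paper derives (the paper truncates $\mu_0$ by compact sets supplied by Ulam's theorem rather than by balls of $\CalE_1\cap\CalE_p$; both are bounded sets, so either works). The genuine gap is in how you convert that estimate into relative compactness of $(\mu_{t^j})_j$: you assert that the neighbourhood $\CalO_\eps(\CalA_p)$ is totally bounded and hence has compact closure $K_\eps$. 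This is false in the infinite-dimensional space $\CalE_p$: $\CalO_\eps(\CalA_p)$ contains a full $\eps$-ball around every point of $\CalA_p$, and balls in $\CalE_p$ are not totally bounded. So you have not produced the compact sets required by the classical Prokhorov theorem, and the extraction of the weak limit $\mu$ is unjustified as written. The repair is exactly what the paper does: your estimate shows that for every $\eta>0$ the single compact set $\CalA_p$ satisfies $\liminf_j\mu_{t^j}(\CalA_{p,\delta})\ge 1-\eta$ for every $\delta>0$, which is \emph{asymptotic} tightness in the sense of Theorem 1.3.9 in \cite{VW96}, and that version of Prokhorov's theorem yields the convergent subsequence. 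Once this is fixed, your portmanteau argument for $\mu(\CalA_p)=1$ and the shift computation for invariance go through (the $\mu$-a.e.\ continuity of $\varphi\circ S(s)$ in the $\CalE_p$ topology deserves a word, but the paper is no more detailed on that point).

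Two smaller issues. First, the paper spends some effort verifying that $\CalE_1\cap\CalE_p$ is a Borel subset of $\CalE_p$ and that the Borel $\sigma$-algebras induced by $\|\cdot\|_{\CalE_1\cap\CalE_p}$ and $\|\cdot\|_{\CalE_p}$ coincide on it; without this, regarding the averaged measures as Borel measures on $\CalE_p$ is not automatic, and you skip it entirely. Second, your extension of \eqref{eq: krylov-bogolioubov} from $C_b(\CalE_p)$ to $C(\CalE_p)$ via truncation and the moment hypothesis \eqref{eq: main thm finite p moment} proves less than the statement claims (the theorem assumes no moment condition) and is in any case unnecessary: since $\CalA_p$ is compact, any $\varphi\in C(\CalE_p)$ is bounded on a sufficiently small $\delta$-enlargement of $\CalA_p$, and combining this with your concentration estimate gives the identity for all of $C(\CalE_p)$, which is the route the paper takes via the cited reference.
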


\begin{proof}
We begin the proof by making some observations on measurability.\\
First of all, we note that $\CalE_1 \cap \CalE_p$ is a Borel subset of $\CalE_p$ since it can be written as
\[\CalE_1 \cap \CalE_p = \bigcup_{l \in \N} \bigcap_{m\in\N} \lbrace u \in \CalE_p : \|\omega(u)\|_{L^1(B_m)} \leq l\rbrace,\]
which is a countable union of countable intersections of closed sets in $\CalE_p$. Moreover, on $\CalE_1 \cap \CalE_p$, the Borel-$\sigma$-algebras generated by the standard norm $\|\cdot\|_{\CalE_1 \cap \CalE_p}$ on that space and the subspace norm $\|\cdot\|_{\CalE_p}$ coincide: Since $\|\cdot\|_{\CalE_p} \leq \|\cdot\|_{\CalE_1 \cap \CalE_p}$, open sets in $\CalE_1 \cap \CalE_p$ w.r.t. $\|\cdot\|_{\CalE_p}$ are also open w.r.t. $\|\cdot\|_{\CalE_1 \cap \CalE_p}$.\\
Conversely, since $(\CalE_1 \cap \CalE_p,\|\cdot\|_{\CalE_1 \cap \CalE_p})$ is separable, it suffices to note that an arbitrary open ball $B = \lbrace v \in \CalE_1 \cap \CalE_p : \|u-v\|_{\CalE_1 \cap \CalE_p} < r \rbrace$ with center $u \in \CalE_1 \cap \CalE_p$ and radius $r > 0$ is Borel-measurable w.r.t. the Borel-$\sigma$-algebra on $\CalE_1 \cap \CalE_p$, generated by $\|\cdot\|_{\CalE_p}$. This holds since
\[B = \bigcup_{l \in \N} \bigcap_{m\in\N} \lbrace v \in \CalE_1 \cap \CalE_p: \|u-v\|_{\CalE_p} + \|\omega(u)- \omega(v)\|_{L^1(B_m)} \leq r - \frac{1}{l}\rbrace,\]
which is a countable union of countable intersections of subsets of $\CalE_1 \cap \CalE_p$ which are (relatively) closed w.r.t. $\|\cdot\|_{\CalE_p}$.\\
We now define the average measure $\overline{\mu}^{j}$ for every $j \in \N$ as the Borel probability measure on $\CalE_1 \cap \CalE_p$ given by
\[\overline{\mu}^{j}(A) \coloneqq \frac{1}{t^j}\int_0^{t^j} \mu_0(S(t)^{-1}A)\,dt\]
for every Borel measurable $A$ in $\CalE_1 \cap \CalE_p$. This is well-defined as $\lbrace S(t) \rbrace_{t \geq 0}$ is a continuous semigroup on $\CalE_1 \cap \CalE_p$.\\
Due to the above considerations, we may also view $(\overline{\mu}^j)_{j \in \N}$ as a measure on the Borel-$\sigma$-algebra of $\CalE_p$ via the formula $\overline{\mu}_j = \overline{\mu}_j(\cdot \cap (\CalE_1 \cap \CalE_p))$. We then argue that the sequence of measures $(\overline{\mu}^j)_{j \in \N}$ on $\CalE_p$ is asymptotically tight, which, by Prokhorov's theorem (cf. Theorem 1.3.9 in \cite{VW96}), implies \eqref{eq: krylov-bogolioubov} for $\varphi \in C_b(\CalE_p)$.\\
We first note that $\mu_0$, as a finite Borel measure on the separable Banach space $\CalE_1 \cap \CalE_p$, is automatically tight so that there exist compact sets $(K^n)_{n\in\N} \subset \CalE_1 \cap \CalE_p$ satisfying
\[\mu_0(K^n) \geq 1 - \frac{1}{n}\]
for every $n \in \N$. Fix $\delta > 0$ for the moment and consider the open $\delta$-enlargement
\[\CalA_{p,\delta} \coloneqq \lbrace u \in \CalE_p : \dist_{\CalE_p}(u,\CalA_p) < \delta\rbrace.\]
Due to the attraction property of $\CalA_p$ proved in \Cref{prop: attractor lp vort}, for every $n \in\N$ there exists $t_0^n = t_0^n(\delta)$ s.t. for every $t \geq t_0^n$,
\[S(t)K^n \subset \CalA_{p,\delta}.\]
This particularly implies for $t \geq t_0^n$,
\[S(t)^{-1}(\CalA_{p,\delta} \cap (\CalE_1 \cap \CalE_p)) \supset S(t)^{-1}(S(t)K^n \cap (\CalE_1 \cap \CalE_p)) = S(t)^{-1}(S(t)K^n) \supset K^n.\]
Hence, $\mu_0(S(t)^{-1}(\CalA_{p,\delta}\cap(\CalE_1 \cap \CalE_p))) \geq 1 - \frac{1}{n}$ for $t \geq t_0^n$. Therefore,
\begin{align}
\overline{\mu}^j(\CalA_{p,\delta}) &= \overline{\mu}^j(\CalA_{p,\delta} \cap (\CalE_1 \cap \CalE_p))\\
&= \frac{1}{t^j}\left(\int_0^{t^n_0} \mu_0(S(t)^{-1}(\CalA_{p,\delta} \cap (\CalE_1 \cap \CalE_p)))\,dt + \int_{t^n_0}^{t^j} \mu_0(S(t)^{-1}(\CalA_{p,\delta} \cap (\CalE_1 \cap \CalE_p)))\,dt\right)\\
&\geq \frac{t^j-t_0^n}{t^j}\left(1 - \frac{1}{n}\right),
\end{align}
which implies asymptotic tightness.\\
The Borel probablity measure $\mu$ on $\CalE_p$, obtained as weak limit of an adequate subsequence $(\overline{\mu^{j^k}})_{k\in\N}$ is clearly concentrated on $\CalE_1 \cap \CalE_p$ since every measure $\overline{\mu}^{j^k}$ is. Invariance w.r.t. $\lbrace S(t) \rbrace_{t \geq 0}$ can be seen from \eqref{eq: krylov-bogolioubov}.\\
We also obtain from the above considerations that for every $\delta > 0$,
\[\mu(\cl_{\CalE_p}(A_{p,\delta})) \geq \limsup_{k \to \infty} \overline{\mu}^{j^k}(\cl_{\CalE_p}(A_{p,\delta})) \geq 1 - \frac{1}{n},\]
where $\cl_{\CalE_p}$ denotes the closure in $\CalE_p$.
Since $n \in\N$ can be chosen arbitrarily, this implies $\mu(\cl_{\CalE_p}(A_{p,\delta})) = 1$. Due to $\bigcap_{\delta > 0} \cl_{\CalE_p}(A_{p,\delta})= \CalA_p$, we conclude $\mu(\CalA_p) = 1$.\\
The fact that in \eqref{eq: krylov-bogolioubov} we may consider all continuous functions and not just bounded continuous functions follows from $\CalA_p$ being compact in $\CalE_p$ and can be seen as in Theorem 4.1 in \cite{CZ19}.
\end{proof}

\section{Vanishing of long time average $p$-enstrophy dissipation rate}\label{sec: vanishing p-enstrophy dissip rate}
In this concluding part of the article, we prepare and prove \Cref{thm: main thm}.\\
We begin by remarking first that in \eqref{eq: main thm finite p moment}, there does not have to be a uniform bound in $\nu > 0$ for the moments on the left-hand side. This is essentially true since in \eqref{eq: vanishing mean enstrophy}, we first consider the long time limit after which all arguments to follow `` take place '' on the compact global attractor, independently from the initial data or distribution.\\
\hfill\\
As we consider the inviscid limit $(\nu \to 0)$ in this section, from now on, we will indicate the dependence of the solutions, the attractors, etc. on the viscosity parameter $\nu > 0$ by adding it as a superscript.\\
Let us now describe the setting in which \Cref{thm: main thm} holds. We suppose $1 < p < \infty$, let $r \coloneqq \max\lbrace 2,p \rbrace$ and consider functions $(f^\nu)_{\nu > 0}$ that are bounded in $\CalE_1 \cap \CalE_r$. We denote $g^\nu \coloneqq \curl f^\nu$ and suppose that for some $f \in \CalE_1 \cap \CalE_p$ with $g \coloneqq \curl f$, 
\[g^\nu \to g \,(\nu \to 0)\text{ in } L^r(\R^2).\]
Then we may apply all results from the previous sections. The solution operator to the Cauchy problem will be denoted by $\Sigma^\nu\colon \CalE_1 \cap \CalE_p \to C_{loc}([0,\infty);\CalE_1 \cap \CalE_p)$ and the solution semigroup on $\CalE_1 \cap \CalE_p$ of the damped Navier-Stokes equations by $\lbrace S^\nu(t)\rbrace_{t \geq 0}$.\\
The vorticity may be interpreted as a continuous mapping from $\CalE_p$ to $L^p(\R^2)$ and we denote the composition of $\Sigma^\nu$ with the vorticity mapping by
\[\Sigma^\nu_\omega \colon \CalE_1 \cap \CalE_p \to C_{loc}([0,\infty);L^p(\R^2)).\]

Since we do not have a well-defined solution semigroup for the damped Euler equations, we work on the level of trajectories in the next lemma and consider the semigroup of time-shifts $\lbrace T(t) \rbrace_{t \geq 0}$ on $C_{loc}([0,\infty);L^p(\R^2))$. 

\begin{lem}\label{lem: stationary enstrophy balance}
Let $\rho$ be a Borel probability measure on $\CalF \coloneqq C_{loc}([0,\infty);L^p(\R^2))$ with support in the set of all renormalized solutions of the vorticity formulation of the damped Euler equations with right-hand side $g$ as in \Cref{thm: EE existence}, invariant w.r.t. $\lbrace T(t) \rbrace_{t \geq 0}$ and satisfying
\begin{equation}\label{eq: finite p-moment}
\int_{\CalF}\int_{\R^2} |\omega(0)|^p\,dx\,d\rho(\omega) < \infty.
\end{equation}
Then
\begin{equation}\label{eq: stationary enstrophy balance}
\int_{\CalF} \int_{\R^2} -\gamma |\omega(t)|^p + g\omega(t)|\omega(t)|^{p-2}\,dx\,d\rho(\omega) = 0
\end{equation}
for every $t \geq 0$.
\end{lem}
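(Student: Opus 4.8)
The plan is to reduce the pointwise-in-$t$ identity \eqref{eq: stationary enstrophy balance} to a single energy balance and let the invariance of $\rho$ do the work of propagating it in time. Write $\phi(v) \defeq \int_{\R^2} -\gamma|v|^p + gv|v|^{p-2}\,dx$ for $v \in L^p(\R^2)$ and $E_\omega(t) \defeq \int_{\R^2}|\omega(t)|^p\,dx$, so that the integrand in \eqref{eq: stationary enstrophy balance} is $\phi(\omega(t))$ and the claim becomes $h(t) \defeq \int_{\CalF}\phi(\omega(t))\,d\rho(\omega) = 0$ for every $t \geq 0$. Since $\phi(\omega(t))$ depends on the trajectory only through its time-$t$ slice, i.e. $\phi(\omega(t)) = \phi((T(t)\omega)(0))$, the invariance $T(t)_*\rho = \rho$ gives immediately that $h$ is constant: $h(t) = \int_{\CalF}\phi((T(t)\omega)(0))\,d\rho = \int_{\CalF}\phi(\omega(0))\,d\rho = h(0)$. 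By the same token, $\int_{\CalF}E_\omega(t)\,d\rho$ is constant in $t$, equal to $C_0 \defeq \int_{\CalF}E_\omega(0)\,d\rho$, which is finite by the $p$-moment assumption \eqref{eq: finite p-moment}.

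It therefore suffices to show $h(0) = 0$, and this is where I would invoke the energy balance. For $\rho$-a.e. $\omega$ (which lies in the support of $\rho$ and is hence, by hypothesis, a renormalized solution with right-hand side $g$), \Cref{thm: EE existence} gives $\tfrac{d}{dt}E_\omega(t) = p\,\phi(\omega(t))$ a.e. on $[0,\infty)$. Since $\omega \in \CalF = C_{loc}([0,\infty);L^p(\R^2))$, the slice map $t \mapsto \omega(t)$ is continuous into $L^p$, so $E_\omega(t)$ is continuous, and, using $g \in L^p(\R^2)$ together with the continuity of the Nemytskii map $v \mapsto v|v|^{p-2}$ from $L^p$ into $L^{p/(p-1)}$, the term $\int_{\R^2}g\omega(t)|\omega(t)|^{p-2}\,dx$ is continuous as well. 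Thus the right-hand side $p\,\phi(\omega(\cdot))$ is continuous, $E_\omega \in C^1([0,\infty))$, and the fundamental theorem of calculus yields
\[
E_\omega(T) - E_\omega(0) = p\int_0^T \phi(\omega(t))\,dt \qquad (T > 0).
\]

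Integrating this identity over $\rho$ and interchanging the order of integration gives $\int_{\CalF}[E_\omega(T) - E_\omega(0)]\,d\rho = p\int_0^T h(t)\,dt = pT\,h(0)$, the last step using that $h$ is constant. The left-hand side vanishes because $\int_{\CalF}E_\omega(T)\,d\rho = \int_{\CalF}E_\omega(0)\,d\rho = C_0$, so $pT\,h(0) = 0$ forces $h(0) = 0$ and hence $h \equiv 0$, which is precisely \eqref{eq: stationary enstrophy balance}. The Fubini interchange is legitimate: the integrand is jointly measurable (being continuous in $t$ and measurable in $\omega$), and the pointwise bound $|\phi(\omega(t))| \leq \gamma E_\omega(t) + \|g\|_{L^p}E_\omega(t)^{(p-1)/p} \leq C(1 + E_\omega(t))$ together with the uniform-in-$t$ estimate $\int_{\CalF}E_\omega(t)\,d\rho = C_0$ yields $\int_0^T\!\int_{\CalF}|\phi(\omega(t))|\,d\rho\,dt \leq CT(1 + C_0) < \infty$. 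The main point requiring care is thus this measurability-and-integrability verification for Fubini; by contrast, the step from a balance valid for a single (or almost every) $t$ to the assertion for \emph{every} $t$ — which would normally demand a dominated-convergence continuity argument for $h$ — is obtained for free from the invariance of $\rho$, which already forces $h$ to be constant.
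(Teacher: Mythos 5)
Your proof is correct and follows essentially the same route as the paper: invariance of $\rho$ under the time shifts makes the quantity constant in $t$, and then integrating the energy balance \eqref{eq: EE energy equ} over a time interval, exchanging the order of integration via Fubini, and using invariance of the $p$-enstrophy moment forces that constant to be zero. Your explicit verification of the Fubini hypotheses and of the passage from the a.e.\ differential form of \eqref{eq: EE energy equ} to its integrated form is a welcome addition, but the underlying argument is the one in the paper.
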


\begin{proof}
Due to the invariance of $\rho$ w.r.t. $\lbrace T(t) \rbrace_{t \geq 0}$, \eqref{eq: finite p-moment} holds for every $t \geq 0$ and we have 
\begin{equation}
\begin{split}
&\int_{\CalF} \int_{\R^2} -\gamma |\omega(t)|^p + g\omega(t)|\omega(t)|^{p-2}\,dx\,d\rho(\omega)\\
&\qquad= \int_{\CalF}\int_{\R^2} -\gamma |\omega(0)|^p + g\omega(0)|\omega(0)|^{p-2}\,dx\,d\rho(\omega)
\end{split}
\end{equation}
so that the left-hand side is constant in $t$. Then, by taking the time average and applying both Fubini's theorem and \eqref{eq: EE energy equ}, we obtain for all $0 \leq t \leq M$,
\begin{equation}
\begin{split}
&\int_{\CalF} \int_{\R^2} -\gamma |\omega(t)|^p + g\omega(t)|\omega(t)|^{p-2}\,dx\,d\rho(\omega)\\
= &\frac{1}{M}\int_0^M\int_{\CalF} \int_{\R^2} -\gamma |\omega(t)|^p + g\omega(t)|\omega(t)|^{p-2}\,dx\,d\rho(\omega)\,dt\\
= &\frac{1}{M}\int_{\CalF}\int_0^M \int_{\R^2} -\gamma |\omega(t)|^p + g\omega(t)|\omega(t)|^{p-2}\,dx\,dt\,d\rho(\omega)\\
= &\frac{1}{M}\int_{\CalF} (\|\omega(M)\|_{L^p}^p - \|\omega(0)\|_{L^p}^p)\,d\rho(\omega),
\end{split}
\end{equation}
which is $0$ due to invariance of $\rho$ w.r.t. $\lbrace T(t) \rbrace_{t \geq 0}$.
\end{proof}

In preparation for the proof of \Cref{thm: main thm}, for every $\nu > 0$, we define the set of vorticity of bounded, complete solutions with right-hand side $g^\nu$ and restricted to non-negative times
\begin{equation}\label{eq: complete vort sols}
\CalV^\nu_p \coloneqq \lbrace \omega(u)|_{[0,\infty)} \in C_{loc}([0,\infty);(L^1 \cap L^p)(\R^2)) : u \in \CalK^\nu_p\rbrace.
\end{equation}
Moreover, let $\CalV^0_p$ be the set of all renormalized solutions of the vorticity formulation of the damped Euler equations with right-hand side $g$ as in \Cref{thm: EE existence}. Due to \Cref{lem: l2 bdd attractor} and \Cref{prop: weak to strong conv}, the following lemma holds.

\begin{lem}\label{lem: U is precompact}
For any sequence $\nu^k \to 0\,(k\to\infty)$, the closure in $C_{loc}([0,\infty);L^p(\R^2))$ of the union 
\begin{equation}\label{eq: union NSE and EE}
\CalV_p \coloneqq \bigcup_{k \in \N} \CalV^{\nu^k}_p \cup \CalV^0_p
\end{equation}
is compact.
\end{lem}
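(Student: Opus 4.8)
The plan is to use that $\CalF \coloneqq C_{loc}([0,\infty);L^p(\R^2))$ is completely metrizable: it carries the complete metric induced by the seminorms $\sup_{t\in[0,m]}\|\cdot\|_{L^p}$, $m\in\N$, since $L^p(\R^2)$ is complete and uniform limits on compact intervals of continuous maps are continuous. In such a space a subset has compact closure if and only if it is relatively sequentially compact, so it suffices to show that an arbitrary sequence $(\omega_n)_{n\in\N}\subset\CalV_p$ admits a subsequence converging in $\CalF$; its limit then automatically lies in $\cl_{\CalF}(\CalV_p)$.

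First I would collect uniform a priori bounds. Writing $r=\max\{2,p\}\ge 2$, every complete trajectory $u\in\CalK^{\nu^k}_p$ satisfies $\sup_{t}\|\omega(u)(t)\|_{L^q}\le\|g^{\nu^k}\|_{L^q}/\gamma$ for $q\in\{1,p\}$ by definition of $\CalK_p$, and, by \Cref{lem: l2 bdd attractor}, also the $\nu$-independent bound \eqref{eq: bd ancient trajectories}. Since $(g^{\nu^k})_{k}$ converges to $g$ in $L^r$ and is bounded in $L^1$, hence bounded in $(L^1\cap L^r)(\R^2)$, this yields a single constant $C$ with $\sup_{t\ge0}\|\omega(t)\|_{L^1\cap L^r}\le C$ for every $\omega\in\bigcup_k\CalV^{\nu^k}_p$; the same bound holds on $\CalV^0_p$, where it is inherited in the inviscid limit from the $\nu$-independent estimate \eqref{eq: NSE vort lp bound}. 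Given $(\omega_n)$, I would then pass to a subsequence so that the associated viscosities converge, $\nu^{k_n}\to\nu$, the only possibilities being a value $\nu^k>0$ attained infinitely often or $\nu^{k_n}\to0$, and so that the bounded (hence, $L^r$ being reflexive, weakly precompact) initial data $\omega_n(0)$ converge weakly in $L^r(\R^2)$.

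The decisive step is to upgrade this to strong convergence of the whole trajectories, working at the level $r\ge2$, i.e.\ invoking the convergence results with exponent $r$ in place of $p$. In the inviscid case $\nu^{k_n}\to0$, \Cref{prop: weak to strong conv} produces, after a further subsequence, strong convergence $\omega_n\to\omega$ in $C_{loc}([0,\infty);L^r(\R^2))$ to a renormalized damped-Euler solution with right-hand side $g$ lying in $\CalV^0_p$. In the fixed-viscosity case $\nu^{k_n}\equiv\nu^k>0$, the weak-to-strong argument underlying \Cref{prop: attractor lp vort} first upgrades the weak convergence of $\omega_n(0)$ to strong $L^r$ convergence, after which \Cref{thm: strong convergence in lp} (with constant viscosity $\nu^k$) gives the same $C_{loc}([0,\infty);L^r(\R^2))$ convergence to a complete bounded trajectory in $\CalK^{\nu^k}_p$, whose restriction lies in $\CalV^{\nu^k}_p$; if $(\omega_n)$ lies eventually in $\CalV^0_p$ itself, the identical bounds and limiting argument apply. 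It then remains to transfer to the $L^p$ topology: if $p\ge2$ then $r=p$ and nothing is needed, while if $1<p<2$ then $r=2$ and interpolating the uniform $L^1$ bound with the $C_{loc}([0,\infty);L^2)$ convergence gives, for $\theta=2(p-1)/p\in(0,1)$,
\[
\sup_{t\in[0,m]}\|\omega_n(t)-\omega(t)\|_{L^p}\le(2C)^{1-\theta}\,\sup_{t\in[0,m]}\|\omega_n(t)-\omega(t)\|_{L^2}^{\theta}\xrightarrow[n\to\infty]{}0
\]
for every $m\in\N$, so that $\omega_n\to\omega$ in $\CalF$. In all cases the limit belongs to $\cl_{\CalF}(\CalV_p)$, whence $\CalV_p$ is relatively sequentially compact and its closure is compact.

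I expect the main obstacle to be precisely this weak-to-strong upgrade of the initial data, since a priori the bounded sequence $\omega_n(0)$ is only weakly precompact in $L^r$. The key enabling fact is the $\nu$-independent $L^2$ bound of \Cref{lem: l2 bdd attractor}: it is what lets the range $1<p<2$ be treated by first passing to the limit at the $L^2$ level and only afterwards interpolating down to $L^p$ against the uniform $L^1$ bound, whereas the sub-$L^2$ integrability on its own would not be stable under the inviscid limit.
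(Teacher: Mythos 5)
Your argument is correct and is essentially the paper's proof written out in full: the paper justifies this lemma only by citing \Cref{lem: l2 bdd attractor} and \Cref{prop: weak to strong conv}, which is exactly the combination you use (uniform $L^1\cap L^r$ bounds on the complete trajectories, weak-to-strong upgrade of the initial data, trajectory convergence at the level $r=\max\lbrace 2,p\rbrace$ via \Cref{prop: weak to strong conv} in the inviscid case and \Cref{prop: attractor lp vort}/\Cref{thm: strong convergence in lp} at fixed viscosity, and interpolation against the $L^1$ bound to descend to $L^p$ when $1<p<2$). The one point you assert rather than prove is the uniform bound on $\CalV^0_p$: as literally defined that set contains renormalized Euler solutions with arbitrary initial data in $(L^1\cap L^p)(\R^2)$ and is therefore unbounded, so it must be read (as the paper implicitly does when invoking the boundedness of $\overline{\CalV_p}$ in the proof of \Cref{thm: main thm}) as consisting of those renormalized solutions satisfying $\sup_{t}\|\omega(t)\|_{L^q}\leq \|g\|_{L^q}/\gamma$ for $q\in\lbrace 1,p\rbrace$ — an imprecision of the paper's definition rather than a gap in your argument.
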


We may proceed to prove \Cref{thm: main thm} by a similar approach as \cite{CR07}, but with the simplifications described in the introduction to this article and prepared in the previous sections.

\begin{proof}[Proof of \Cref{thm: main thm}]
Suppose that \eqref{eq: vanishing mean enstrophy} is false. Then there exists a sequence $(\nu^n)_{n\in\N} \subset (0,1)$ converging to 0 and some $\delta > 0$ s.t. for every $n \in \N$, there exists a sequence of times $(t^j = t^j(n))_{j\in\N}$ with $t^j \to \infty \,(j \to \infty)$ and
\[\nu^n \frac{1}{t^j}\int_0^{t^j}\int_{\CalE_1 \cap \CalE_p}\int_{\R^2} |\nabla|\omega(S^{\nu^n}(t) u_0)|^\frac{p}{2}|^2\,dx\,d\mu_0^{\nu^n}(u_0)\,dt \geq \delta,\]
$n,j \in \N$. By \eqref{eq: NSE energy equ} and Fubini's theorem, the left-hand side is bounded by 
\begin{equation}
\begin{split}
\frac{p^2}{4(p-1)}\frac{1}{t^j}\int_{\CalE_1 \cap \CalE_p}\int_0^{t^j}\int_{\R^2} -\gamma |\omega(S^{\nu^n}(t) u_0)|^p + g^{\nu^n}\omega(S^{\nu^n}(t) u)|\omega(S^{\nu^n}(t) u)|^{p-2}\,dx\,dt\,d\mu_0^{\nu^n}(u_0)\\
\quad + \frac{1}{t^j}\int_{\CalE_1 \cap \CalE_p}\frac{p}{4(p-1)}(\|\omega(u_0)\|_{L^p}^p - \|\omega(S^{\nu^n}(t^j) u_0)\|_{L^p}^p)\,d\mu_0^{\nu^n}(u_0).
\end{split}
\end{equation}
We have
\begin{equation}
\begin{split}
&\frac{1}{t^j}\int_{\CalE_1 \cap \CalE_p}\frac{p}{4(p-1)}(\|\omega(u_0)\|_{L^p}^p - \|\omega(S^{\nu^n}(t) u)\|_{L^p}^p)\,d\mu_0^{\nu^n}(u_0)\\
\leq &\frac{C}{t^j}\int_{\CalE_1 \cap \CalE_p}\frac{p}{4(p-1)}(\|\omega(u_0)\|_{L^p}^p + \frac{\|g^{\nu^n}\|_{L^p}^p}{\gamma^p})\,d\mu^{\nu^n}_0(u_0)\\
\to &0\, (j \to \infty)
\end{split}
\end{equation}
due to \eqref{eq: main thm finite p moment}.
The integrals 
\[u \mapsto \int_{\R^2}-\gamma|\omega(u)|^p\,dx\text{ and } u \mapsto \int_{\R^2}g^{\nu^n} \omega(u)|\omega(u)|^{p-2}\,dx\]
may be seen as continuous mappings on $\CalE_p$. Therefore, by \Cref{thm: krylov-bogolioubov}, after passing to a subsequence $t^{j^k} \to \infty$, there exists an invariant Borel probability measure $\mu^{\nu^n}$ on $\CalE_p$, concentrated on $\CalA_p^{\nu^n}$ so that 
\begin{equation}\label{eq: proof main thm 1}
\begin{split}
&\frac{4(p-1)}{p^2}\delta\\
\leq &\lim_{k \to \infty} \frac{1}{t^{j^k}} \int_0^{t^{j^k}}\int_{\CalE_1 \cap \CalE_p}\int_{\R^2} -\gamma|\omega(S^{\nu^n}(t) u_0)|^p + g^{\nu^n}\omega(S^{\nu^n}(t) u_0)|\omega(S^{\nu^n}(t) u_0)|^{p-2}\,dx\,d\mu_0^{\nu^n}(u_0)\,dt\\
= &\int_{\CalA_p^{\nu^n}} \int_{\R^2} -\gamma|\omega(u)|^p + g^{\nu^n}\omega(u)|\omega(u)|^{p-2}\,dx\,d\mu^{\nu^n}(u).
\end{split}
\end{equation}
Now we define for every $n\in\N$ the pushforward measures 
\[\rho^{\nu^n} \coloneqq {\Sigma_\omega^{\nu^n}}\sharp \mu^{\nu^n}\]
on $\CalV^{\nu^n}_p \subset \CalV_p \subset C_{loc}([0,\infty);L^p(\R^2))$, with $\CalV_p$ as in \eqref{eq: union NSE and EE}. We fix some $\tau \geq 0$ s.t. due to invariance of $\mu^{\nu^n}$ w.r.t. $\lbrace S^{\nu^n}(t) \rbrace_{t \geq 0}$, the right-hand side in \eqref{eq: proof main thm 1} is equal to
\begin{equation}\label{eq: proof main thm 2}
\int_{\CalV_p} \int_{\R^2} -\gamma |\omega(\tau)|^p + g^{\nu^n}\omega(\tau)|\omega(\tau)|^{p-2}\,dx\,d\rho^{\nu^n}(\omega).
\end{equation}
We can view every $\rho^{\nu^n}$ as a measure on the closure $\overline{\CalV_p}$ of $\CalV_p$ in $C_{loc}([0,\infty);L^p(\R^2))$, which is compact by \Cref{lem: U is precompact}. Then, a subsequence of $(\rho^{\nu^n})_{n \in \N}$ converges weakly-* to a Borel probability measure $\rho$, i.e.,
\[\int_{\overline{\CalV_p}} \Psi(u)\,d\rho^{\nu^n}(u) \to \int_{\overline{\CalV_p}} \Psi(u)\,d\rho(u)\,(n \to \infty)\]
for every continuous $\Psi\colon\overline{\CalV_p} \to \R$. Passing to the limit $(n \to \infty)$ in \eqref{eq: proof main thm 1} then yields
\begin{equation}\label{eq: proof main thm 3}
\frac{4(p-1)}{p^2}\delta \leq \int_{\overline{\CalV_p}} \int_{\R^2}-\gamma|\omega(\tau)|^p + g\omega(\tau)|\omega(\tau)|^{p-2}\,dx\,d\rho(\omega).
\end{equation}
We now derive the contradiction from this. For every $\omega \in \supp\rho$, there exists a sequence $(\tilde{u}^{\nu^n})_{n\in\N} \subset \CalK_p^{\nu^n}$ s.t. (after restricting the time to $[0,\infty)$) $\omega(\tilde{u}^{\nu^n}) \in \supp\rho^{\nu^n} \subset \CalV_p^{\nu^n}$ for every $n \in \N$ and 
\[\omega(\tilde{u}^{\nu^n}) \to \omega\,(n\to\infty) \text{ in }C_{loc}([0,\infty);L^p(\R^2)),\]
cf. \cite{WW23}, Lemma 2.20. In particular, $\omega(\tilde{u}^{\nu^n}(0)) \to \omega(0)\,(n\to\infty)$ in $L^p(\R^2)$ so that by \Cref{thm: strong convergence in lp}, $\omega \in \CalV_p^0$. This means that $\supp\rho \subset \CalV_p^0$ and we may apply \Cref{lem: stationary enstrophy balance}. Note that \eqref{eq: finite p-moment} is satisfied as $\rho$ is concentrated on $\overline{\CalV_p}$, which is bounded in $C_b([0,\infty);L^p(\R^2))$. But then \eqref{eq: stationary enstrophy balance} contradicts \eqref{eq: proof main thm 3}.
\end{proof}

\section*{Acknowledgments}
The author would like to thank Emil Wiedemann for helpful discussions and comments on the manuscript.

\nocite{*}
\bibliography{Long_time_enstrophy_dissipation_rate.bib}
\bibliographystyle{abbrv}

\end{document}